\theoremstyle{plain}
\newtheorem{theorem}{Theorem}
\newtheorem{corollary}[theorem]{Corollary}
\newtheorem{lemma}[theorem]{Lemma}
\theoremstyle{definition}
\newtheorem{example}[theorem]{Example}
\newtheorem*{remark*}{Remark}
\newcommand{\rd}{\mathbb R^d}
\newcommand{\R}{\mathbb R}
\newcommand{\pr}{\mathbf P}
\newcommand{\e}{\mathbf E}
\newcommand{\ra}{\rightarrow}
\begin{document}

\title[Green function of a random walk in a cone.]
{Green function of a random walk in a cone.}

\author[Duraj]{Jetlir Duraj}
\address{Department of Economics, Harvard University, USA }
\email{duraj@g.harvard.edu}

\author[Wachtel]{Vitali Wachtel}
\address{Institut f\"ur Mathematik, Universit\"at Augsburg, 86135 Augsburg, Germany}
\email{vitali.wachtel@math.uni-augsburg.de}

\begin{abstract}
This paper studies the asymptotic behavior of the Green function of a multidimensional random walk
killed when leaving a convex cone with smooth boundary. Our results imply uniqueness,
up to a multiplicative factor, of the positive harmonic function for the killed
random walk.
\end{abstract}

\keywords{Green function,Martin boundary}
\subjclass{Primary 60G50; Secondary 60G40, 60F17} 
\maketitle
\section{Introduction, main results and discussion}

Consider a random walk $\{S(n),n\geq1\}$ on $\rd$, $d\geq1$, where
$$
S(n)=X(1)+\cdots+X(n)
$$
and $\{X(n), n\geq1\}$ is a family of independent copies of a random variable $X=(X_1,X_2,\ldots,X_d)$.
Denote by $\mathbb{S}^{d-1}$ the unit sphere of $\rd$ and $\Sigma$ an open and connected subset of
$\mathbb{S}^{d-1}$. Let $K$ be the cone generated by the rays emanating from the origin and passing
through $\Sigma$, i.e. $\Sigma=K\cap \mathbb{S}^{d-1}$.

Let $\tau_x$ be the exit time from $K$ of the random walk with starting point $x\in K$, that is,
$$
\tau_x=\inf\{n\ge 1: x+S(n)\notin K\}.
$$
Denisov and Wachtel \cite{DW15,DW18+} have constructed a positive harmonic function $V(x)$ for the random
walk $\{S(n)\}$ killed at leaving $K$. That is,
$$
V(x)=\mathbf{E}[V(x+X);\tau_x>1],\quad x\in K.
$$
They have also proved standard and local limit theorems for random walks conditioned to stay in the cone
$K$.

In the present paper we determine the asymptotic behavior of the Green function for
$\{S(n)\}$ killed at leaving $K$ and prove by using the Martin compactification the uniqueness of
the positive harmonic function for such processes.

We next introduce the assumptions on the cone $K$ and on the random walk $\{S(n):n\geq 1\}$. Let $u(x)$ be the
unique strictly positive  on $K$ solution of the following boundary problem:
$$
\Delta u(x)=0,\ x \in K\quad\text{with boundary condition }u\big|_{\partial K}=0.
$$
Let $L_{\mathbb{S}^{d-1}}$ be the Laplace-Beltrami operator on
$\mathbb{S}^{d-1}$ and assume that $\Sigma$ is regular with respect to $L_{\mathbb{S}^{d-1}}$. With this
assumption, there exists 
(see, for example, \cite{BS97})
a complete set of orthonormal eigenfunctions $m_j$  and corresponding
eigenvalues $0<\lambda_1<\lambda_2\le\lambda_3\le\ldots$ satisfying
\begin{align}
 \label{eq.eigen}
  L_{\mathbb{S}^{d-1}}m_j(x)&=-\lambda_jm_j(x),\quad x\in \Sigma\\
  \nonumber m_j(x)&=0, \quad x\in \partial \Sigma .
\end{align}
Define
$$
p:=\sqrt{\lambda_1+(d/2-1)^2}-(d/2-1)>0.
$$
The function $u(x)$ is given by
\begin{equation}
\label{u.from.m}
u(x)=|x|^pm_1\left(\frac{x}{|x|}\right),\quad x\in K.
\end{equation}
As in \cite{DW18+}, we shall impose the following condition on the cone $K$:
\begin{itemize}
\item $K$ is either convex or starlike and $C^2$.
\end{itemize}
We impose the following assumptions on the increments of the random walk:
\begin{itemize}
\item {\it Normalisation assumption:} We assume that $\mathbf EX_j=0,
  \mathbf EX_j^2=1,j=1,\ldots,d$. In addition we assume that
  $cov(X_i,X_j)=0$.
\item {\it Moment assumption:} We assume that $\mathbf
  E|X|^{\alpha}<\infty$ with $\alpha=p$ if $p>2$ and some $\alpha>2$
  if $p\le 2$.
\item {\it Lattice assumption:} $X$ takes values on a lattice $R$ which is
a non-degenerate linear transformation of $\mathbb{Z}^d$.  
\end{itemize}

Our first result describes the asymptotic behavior of the Green function
for endpoints $y$ which lie deep inside the cone $K$.
\begin{theorem}
\label{thm:deep}
Set $r_1(p)=p+d-2+(2-p)^+$ and assume that $\mathbf{E}|X|^{r_1(p)}$ is finite.
If $|y|\to\infty$ and $dist(y,\partial K)\ge \alpha|y|$ for some positive
$\alpha$, then
\begin{equation}
\label{T1}
G_K(x,y):=\sum_{n=0}^\infty\mathbf{P}(x+S(n)=y,\tau_x>n)
\sim cV(x)\frac{u(y)}{|y|^{2p+d-2}}.
\end{equation}
Moreover, this relation remains valid if one replaces the moment condition\\
$\mathbf{E}|X|^{r_1(p)}<\infty$ by the following restriction on the local
structure of $X_1$:
\begin{equation}
\label{local_assump}
\mathbf{P}(X=x)\le |x|^{-p-d+1}f(|x|)
\end{equation}
for some decreasing function $f$ such that $u^{(3-p)\vee 1}f(u)\to 0$ as
$u\to\infty$.
\end{theorem}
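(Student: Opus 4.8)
The plan is to read off \eqref{T1} by summing, over the time parameter $n$, the local limit theorem of Denisov and Wachtel \cite{DW15,DW18+} for the killed walk. That theorem provides, uniformly over $y\in K$ with $|y|\le\theta_n\sqrt{n}$, an expansion of the form
$$
\mathbf{P}(x+S(n)=y,\tau_x>n)=\varkappa\,\frac{V(x)\,u(y)}{n^{(d+2p)/2}}\,e^{-|y|^2/(2n)}\bigl(1+o(1)\bigr)+o\Bigl(\frac{V(x)\,u(y)}{n^{(d+2p)/2}}\Bigr)
$$
(equivalently, with $u(y/\sqrt{n})=n^{-p/2}u(y)$ by homogeneity of $u$), together with a uniform companion bound $\sup_z\mathbf{P}(x+S(n)=z,\tau_x>n)\le C\,V(x)\,n^{-(d+p)/2}$. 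Inserting the leading term into $G_K(x,y)=\sum_{n\ge0}\mathbf{P}(x+S(n)=y,\tau_x>n)$ and approximating the sum by $\int_0^\infty s^{-(d+2p)/2}e^{-1/(2s)}\,ds$, which is finite because $d+2p>2$, produces the right-hand side of \eqref{T1} with an explicit constant $c$; the real work is to control the ranges of $n$ where the local limit theorem is unavailable.

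Accordingly, I would fix a small $\varepsilon>0$ and a large $M$, split the sum over $n$ at the thresholds $\varepsilon|y|^2$ and $M|y|^2$ as $G_K(x,y)=\Sigma_1+\Sigma_2+\Sigma_3$, and treat the three blocks separately. On $\Sigma_2$ one has $|y|\asymp\sqrt{n}$, so the local limit theorem applies uniformly; a Riemann-sum argument (the uniform upper bound dominating the error terms) gives $\Sigma_2=\bigl(c+\eta(\varepsilon,M)\bigr)V(x)u(y)|y|^{2-d-2p}$ with $\eta(\varepsilon,M)\to0$ as $\varepsilon\downarrow0$, $M\uparrow\infty$, the discarded pieces $\int_0^\varepsilon$ and $\int_M^\infty$ of the limiting integral vanishing in this limit. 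For $\Sigma_3$ one uses the local limit theorem merely as an upper bound with $e^{-|y|^2/(2n)}\le1$, obtaining $\Sigma_3\le CV(x)u(y)\sum_{n>M|y|^2}n^{-(d+2p)/2}\asymp CM^{1-(d+2p)/2}V(x)u(y)|y|^{2-d-2p}$, which is $\eta(\varepsilon,M)$-small precisely because $d+2p>2$.

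The crux, and the main technical burden, is $\Sigma_1$, where $|y|\gg\sqrt{n}$ and the local limit theorem says nothing: to reach the far point $y$ the walk must move atypically fast. I would split each summand according to whether all increments stay below $\delta|y|$ (for a small fixed $\delta$) or some increment exceeds $\delta|y|$. For the first alternative, a Fuk--Nagaev/truncated Cram\'er estimate shows that crossing a distance of order $|y|$ in at most $\varepsilon|y|^2$ steps with all increments below $\delta|y|$ has probability at most $|y|^{-K(\delta)}$ with $K(\delta)\to\infty$ as $\delta\downarrow0$ (the borderline moment being furnished by the standing assumption $\mathbf{E}|X|^2<\infty$), so that after summing over $n$ and multiplying by the survival bound $\mathbf{P}(\tau_x>m)\le CV(x)m^{-p/2}$ this stays far below $V(x)u(y)|y|^{2-d-2p}$. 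For the second alternative I would use the last-exit decomposition
$$
\sum_{n}\sum_{j\le n}\mathbf{P}\bigl(x+S(n)=y,\ \tau_x>n,\ |X(j)|\ge\delta|y|\bigr)=\sum_{w\in K}\ \sum_{z:\,|z|\ge\delta|y|}G_K(x,w)\,\mathbf{P}(X=z)\,G_K(w+z,y),
$$
and estimate the right-hand side via a priori moderate-range bounds $G_K(x,w)\lesssim V(x)\,u(w)\,(1+|w|)^{2-d-2p}$ and $\sup_v G_K(v,y)\lesssim(1+\log|y|)\,|y|^{(2-d)^+}$. The dominant contribution is a single jump of size $\asymp|y|$ landing near $y$, and the bookkeeping
$$
\sum_{|w|\lesssim|y|}u(w)(1+|w|)^{2-d-2p}\asymp|y|^{(2-p)^+},\qquad \sup_v G_K(v,y)\asymp|y|^{(2-d)^+},
$$
then produces a bound of order $V(x)\,|y|^{(2-p)^+}\,|y|^{(2-d)^+}\,\mathbf{P}(|X|\gtrsim|y|)$. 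This is $o\bigl(V(x)u(y)|y|^{2-d-2p}\bigr)$ exactly under the moment hypothesis $\mathbf{E}|X|^{r_1(p)}<\infty$ with $r_1(p)=p+d-2+(2-p)^+$ (the finiteness of the moment supplying the decisive little-$o$ at the threshold, with the standing second-moment assumption taking over in the degenerate case $d=1$); and in the version with \eqref{local_assump}, the pointwise bound on $\mathbf{P}(X=z)$ lets one run the same computation keeping $z$ fixed, where the combined exponent $(2-p)^++1=(3-p)\vee1$ surfaces, so that $u^{(3-p)\vee1}f(u)\to0$ is precisely the condition needed. Letting $|y|\to\infty$ first and then $\varepsilon\downarrow0$, $M\uparrow\infty$ finishes the proof.
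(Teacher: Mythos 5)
Your global strategy --- split the sum at $n\asymp|y|^2$, read the large-$n$ block from the local limit theorem, and show the small-$n$ block is negligible --- coincides with the paper's, and your heuristic that the critical moment exponent $r_1(p)=p+d-2+(2-p)^+$ arises from a single big jump of size $\asymp|y|$ is on target. But the way you close the small-$n$ block is circular. In the big-jump branch you bound the post-jump factor $G_K(w+z,y)$ by $\sup_v G_K(v,y)\lesssim|y|^{(2-d)^+}\log|y|$ (itself unproven for $d=2$) and, more seriously, bound the pre-jump factor by an ``a priori moderate-range bound'' $G_K(x,w)\lesssim V(x)\,u(w)\,(1+|w|)^{2-d-2p}$ that is, up to constants, exactly the uniform-in-$w$ upper-bound version of \eqref{T1} you are trying to prove. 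Establishing that bound independently would require essentially the whole argument all over again, and the proposal does not do this.

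The paper avoids the circularity by a different decomposition of the small-$n$ block: introduce the stopping time $\theta_y$ at which $x+S$ first enters the ball of radius $\delta|y|$ around $y$, and bound
\begin{equation*}
S_1(x,y,\varepsilon)\le\mathbf{E}\bigl[G^{(\varepsilon|y|^2)}(y-x-S(\theta_y));\ \tau_x>\theta_y,\ \theta_y\le\varepsilon|y|^2\bigr],
\end{equation*}
where $G^{(t)}$ is the truncated Green function of the \emph{free} walk. Uchiyama's bound $G(z)\le C(1+|z|^{d-2})^{-1}$ for $d\ge3$ (and Spitzer's local CLT for $d=2$) is available a priori and substitutes for your unproven bound on $G_K$. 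The big-jump contribution then appears as the event $|y-x-S(\theta_y)|\le\delta^2|y|$, which forces $|X(\theta_y)|\ge\delta(1-\delta)|y|$ and is killed by the moment hypothesis; the complementary event is handled by the $V$-transform together with the functional limit theorem of \cite{DuW15}, which gives $\lim_{\varepsilon\to0}\limsup_{|y|\to\infty}|y|^p\,\mathbf{P}(\tau_x>\theta_y,\theta_y\le\varepsilon|y|^2)=0$. Finally, your Fuk--Nagaev claim $|y|^{-K(\delta)}$ for the no-big-jump branch is an overstatement over the full range $n\le\varepsilon|y|^2$: there the exponential yields only something of order $(\varepsilon/\delta)^{1/\delta}$, which vanishes as $\varepsilon\to0$ but does not decay polynomially in $|y|$ (polynomial decay needs the further restriction $n\le|y|^{2-r}$, as in the paper's half-space argument), so the sketched bookkeeping does not close as stated.
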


Uchiyama~\cite{Uchiyama98} has shown, see Theorem 2 there, that if $d\ge 5$ and
$\mathbf{E}|X|^{d-2}<\infty$ then
\begin{equation*}
G_{\mathbb{R}^d}(0,z)\sim\frac{c}{|z|^{d-2}},\quad |z|\to\infty.
\end{equation*}
If $d=4$ or $d=3$ then the same is valid provided that respectively
$\mathbf{E}|X|^2\log|X|<\infty$ or $\mathbf{E}|X|^2<\infty$.

Uchiyama mentions also that this moment condition is optimal:
for every $\varepsilon>0$ there exists a random walk satisfying
$\mathbf{E}|X|^{d-2-\varepsilon}<\infty$ such that 
$$
\limsup_{|z|\to\infty}|z|^{d-2}G_{\mathbb{R}^d}(0,z)=\infty.
$$
He has considered the dimensions $4$ and $5$ only, but it is quite simple
to show that this statement holds in every dimension $d\ge5$. 
We now give an example in our setting of a random walk which shows the optimality of 
Uchiyama's condition and of the moment condition in Theorem~\ref{thm:deep}.
Our example is just a multidimensional variation
of the classical Williamson example, see \cite{Williamson68}.
\begin{example}
\label{ex:Will-type}
Let $d$ be greater than $4$ and consider $X$ with the following distributon.
For every $n\ge1$ and for every basis vector $e_k$ put 
$$
\mathbf{P}(X=\pm 2^n e_k)=\frac{q_n}{2d},
$$
where the sequence $q_n$ is such that 
$$
\sum_{n=1}^\infty q_n=1\quad\text{and}\quad
q_n\sim \frac{c\log n}{2^{n(d-2)}}.
$$
Clearly,
$$
\mathbf{E}|X|^{d-2}=\infty\quad\text{and}\quad
\mathbf{E}\frac{|X|^{d-2}}{\log^{1+\varepsilon}|X|}<\infty.
$$
Using now the obvious inequality $G_{\mathbb{R}^d}(0,x)\ge\mathbf{P}(X=x)$,
we conclude that, for every $j=1,2,\ldots,d$,
$$
\lim_{n\to\infty}2^{(d-2)n}G_{\mathbb{R}^d}(0,\pm 2^n e_j)=\infty.
$$

If we have a cone $K$ such that $p\ge2$ and $e_j\in\Sigma$ for some $j$, then,
choosing $q_n\sim\frac{c\log n}{2^{n(p+d-2)}}$, we also have
$$
\lim_{n\to\infty}2^{(p+d-2)n}G_{K}(e_j,(1+2^n)e_j)=\infty.
$$
Therefore, the finiteness of $\e|X(1)|^{r_1(p)}$ can not be replaced by a weaker moment assumption.
\hfill$\diamond$
\end{example}
But Uchiyama shows that the moment assumption $\mathbf{E}|X|^{d-2}$ is not necessary, as it can be replaced by $\mathbf{P}(X=x)=o(|x|^{-d-2})$, which implies the existence of the second moment
only. In Theorem~\ref{thm:deep} we have a similar situation: the moment condition $\mathbf{E}|X|^{r_1(p)}<\infty$
is not necessary and can be replaced by the assumption \eqref{local_assump}, which yields the finiteness
of $\mathbf{E}|X|^{p\vee2}$ only. It has been shown in \cite{DW15}, if $p>2$ then the condition
$\mathbf{E}|X|^{p}<\infty$ is an optimal moment condition for the existence of the harmonic function
$V(x)$. 

We now turn to the asymptotic behavior of the Green function along the boundary of the cone.
\begin{theorem}
\label{thm:boundary}
Assume that $K$ is convex and $C^2$. Assume also that $\mathbf{E}|X|^{r_1(p)+1}<\infty$. If $y/|y|$
converges to $\sigma\in\partial\Sigma$ as $|y|\to\infty$ then there exists a strictly positive function
$v_\sigma(y)$ such that
\begin{equation}
\label{T3}
G_K(x,y)\sim c\frac{V(x)v_\sigma(dist(y,\partial K))}{|y|^{p+d-1}}.
\end{equation}
The function $v_\sigma$ is asymptotically linear, that is,
$$
v_\sigma(t)\sim c_\sigma t\quad\text{as }t\to\infty.
$$
Moreover, the same relation for $G_K$ holds if one replaces the moment condition\\
$\mathbf{E}|X|^{r_1(p)+1}<\infty$ by the following restriction on the local
structure of $X_1$:
\begin{equation}
\label{local_assumprand}
\mathbf{P}(X=x)\le |x|^{-p-d+1}f(|x|)
\end{equation}
for some decreasing function $f$ such that $\log(u)u^{(3-p)\vee 1}f(u)\to 0$ as
$u\to\infty$.
\end{theorem}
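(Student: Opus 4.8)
Write $t=dist(y,\partial K)$ and $p_n^K(x,y)=\mathbf{P}(x+S(n)=y,\tau_x>n)$, so that $G_K(x,y)=\sum_{n\ge0}p_n^K(x,y)$. As in Theorem~\ref{thm:deep} the series is dominated by times $n$ of order $|y|^2$; the new features are that near $\partial K$ the smooth profile $u$ has to be replaced by a boundary-layer correction, and that, $y$ being close to $\partial K$, the Green function is by a factor $|y|$ smaller than deep inside the cone, so the error estimates must be one order sharper --- which is where the extra moment, resp.\ the additional $\log$ in \eqref{local_assumprand}, is used. Concretely I would split the series at $n_1=\varepsilon|y|^2$ and $n_2=|y|^2/\varepsilon$: for $n\le n_1$, one-big-jump/large-deviation estimates under $\mathbf{E}|X|^{r_1(p)+1}<\infty$ (resp.\ under \eqref{local_assumprand}) give $\sum_{n\le n_1}p_n^K(x,y)=o\big(V(x)v_\sigma(t)|y|^{-(p+d-1)}\big)$, while for $n\ge n_2$ the Gaussian-type upper bounds for $p_n^K$ from \cite{DW15,DW18+} together with the $p$-homogeneity of $u$ bound the tail by $O(\varepsilon^{p+d/2-1})$ times the main term. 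It thus remains to treat $n_1<n<n_2$ and then let $\varepsilon\to0$.

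On that range $\sqrt n\asymp|y|$, and within distance $o(|y|)$ of $y$ the $C^2$ convex cone $K$ is uniformly close to the half-space $H_\sigma$ tangent to $\partial K$ at the (unique) point of $\partial K$ nearest to $y$; the walk conditioned to stay in $K$, observed near $y$, should therefore look like the corresponding object for $H_\sigma$. The key analytic input is a Denisov--Wachtel-type local limit theorem, refined so as to be uniform as $y/|y|\to\sigma$; I expect it to read $p_n^K(x,y)\sim\kappa\,V(x)\,v_\sigma(t)\,|y|^{p-1}\,n^{-p-d/2}\,e^{-|y|^2/2n}$ for $n_1<n<n_2$, i.e.\ the usual cone asymptotics $\kappa V(x)u(y)n^{-p-d/2}e^{-|y|^2/2n}$ with the near-boundary value $u(y)\approx c_\sigma t\,|y|^{p-1}$ replaced by the corrected factor $v_\sigma(t)\,|y|^{p-1}$. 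Summing over $n_1<n<n_2$ and using $\sum_n n^{-p-d/2}e^{-|y|^2/2n}\sim c\,|y|^{2-2p-d}$ yields \eqref{T3} after $\varepsilon\to0$, with $v_\sigma$ identified in the next step.

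To prove the refined local limit theorem and identify $v_\sigma$ I would use a last-exit decomposition at a macroscopic scale. Put $\rho=\alpha|y|$ with $\alpha$ a small constant and decompose a $K$-path from $x$ to $y$ at its last visit to $\{|w-y|\ge\rho\}$ (the alternative --- that the path never leaves $B(y,\rho)$ --- being impossible once $|y|$ is large, since then $x\notin B(y,\rho)$); this gives $G_K(x,y)=\sum_{w\in K,\,|w-y|\ge\rho}G_K(x,w)\,\Psi(w,y)$, where $\Psi(w,y)=\sum_{m\ge1}\mathbf{P}(w+S(m)=y,\ w+S(i)\in K\cap B(y,\rho)\ \text{for}\ 1\le i\le m)$. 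A crude a priori upper bound of the right order for $G_K(x,\cdot)$ --- itself a consequence of the Denisov--Wachtel upper bounds --- together with the moment/local assumption shows that, to leading order, only $w$ with $|w-y|\asymp\rho$ and $dist(w,\partial K)\asymp\rho$ contribute; for such (deep) $w$, Theorem~\ref{thm:deep} applies and $G_K(x,w)\sim cV(x)u(w)|w|^{-(2p+d-2)}$. There remains a purely half-space problem: replacing $K$ near $y$ by $H_\sigma$ (the curvature error being $O(\rho^2/|y|)$ on balls of radius $O(\rho)$ by the $C^2$ hypothesis, and single jumps from outside $K$ onto $y$ being controlled by the extra moment, resp.\ \eqref{local_assumprand}), the boundary Harnack principle in $H_\sigma$ exhibits $\Psi(w,y)$, as $|y|\to\infty$ and $y/|y|\to\sigma$, as a ``bulk'' factor depending only on $w/|y|$ and $\sigma$ times $v_\sigma(t)\,|y|^{-(p+d-1)}$, where $v_\sigma(s)$ is the Denisov--Wachtel harmonic function, for the reversed walk $-X$ in $H_\sigma$, of a point at distance $s$ from $\partial H_\sigma$. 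By translation invariance parallel to $\partial H_\sigma$ this function depends only on $s$ and is strictly positive, and, the harmonic function of Brownian motion in $H_\sigma$ being the linear map $z\mapsto dist(z,\partial H_\sigma)$ and Denisov--Wachtel's $V$ being asymptotic to it, $v_\sigma(s)\sim c_\sigma s$ as $s\to\infty$. Summing over $w$ then produces \eqref{T3}.

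I expect the hard part to be this last step: making the replacement of $K$ by $H_\sigma$ near $y$ quantitative and uniform, so that neither the curvature of $\partial K$ nor large jumps affect the leading term, and showing that the sum over $w$ really reproduces the Denisov--Wachtel half-space harmonic function rather than some spurious limit. This is precisely where convexity and the $C^2$ hypothesis come in --- through boundary Harnack and harmonic-measure estimates for the thin region near $\partial K$ --- and where the strengthening of the moment condition from $r_1(p)$ to $r_1(p)+1$, resp.\ the additional $\log$ in \eqref{local_assumprand}, is genuinely needed. A more routine, but unavoidable, difficulty is to extract from Theorem~\ref{thm:deep} and from the Denisov--Wachtel local limit theorem enough uniformity (in $w$, and as $y/|y|\to\partial\Sigma$) to justify inserting their asymptotics inside the sums over $n$ and over $w$.
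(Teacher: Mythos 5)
Your high-level outline (split the Green-function series at $n\asymp|y|^2$, handle small $n$ by one-big-jump estimates, large $n$ by Gaussian tail bounds, and middle $n$ by a boundary local analysis, with the harmonic function of the reversed walk in the tangent half-space $H_\sigma$ producing $v_\sigma$) matches the paper's strategy at the level of intent, and you correctly identify where the extra moment/log enters. But the key step is carried out by a genuinely different, and only partially sketched, route: you propose to prove a refined boundary local limit theorem directly, via a macroscopic last-exit decomposition and a boundary Harnack principle for the lattice walk in $K$. The paper never proves such an LLT and never invokes boundary Harnack. Instead, for the middle range it splits each $p_n^K(x,y)$ at time $m=[n/2]$, applies the Denisov--Wachtel LLT to the first half and a Durrett-type invariance principle plus the one-dimensional half-space exit asymptotic $\mathbf{P}(T_y'>k)\sim v'(y)k^{-1/2}$ to the time-reversed second half; and for the small-$n$ part it uses a \emph{first-entrance} decomposition into the ball $B_{\delta,y}$ and bounds the inner sum by the Green function of the tangent half-space. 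For this last step the paper must first establish the full Green-function asymptotics for half-spaces (Theorem~\ref{thm:half-space}, proved in Section~3 via change of measure and Fuk--Nagaev), a separate ingredient your plan does not flag as necessary.

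Two concrete issues. First, the refined LLT you conjecture, $p_n^K(x,y)\sim\kappa V(x)v_\sigma(t)|y|^{p-1}n^{-p-d/2}e^{-|y|^2/2n}$, has the wrong shape for $p\neq1$: the correct form near the boundary is $\asymp V(x)v'(y)\,n^{-(p+d+1)/2}\phi_\sigma(n/2|y|^2)$ with a \emph{non-Gaussian} profile $\phi_\sigma$ coming from the Brownian meander started on $\partial\Sigma$; your formula and the correct one agree only when $p=1$. They produce the same $|y|$-scaling after summation, so this does not break \eqref{T3}, but it does mean your proposed intermediate statement is false as stated. Second, the boundary Harnack step is the crux, and you treat it as an input rather than something to be proved; for lattice random walks in general convex $C^2$ cones such a Harnack principle is not available off the shelf and would itself be a substantial result. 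The paper circumvents this entirely by reducing the boundary analysis to the half-space via the first-entrance decomposition and the lower bound $V(y)\ge c|y|^{p-1}(1+dist(y,\partial K))$, together with the half-space Green function. So your plan is a plausible but different attack; to close it you would need both a corrected form of the boundary LLT and a proof of the boundary Harnack/harmonic-measure estimates you invoke.
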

Clearly, one can adapt the random walk from Example~\ref{ex:Will-type} to show that the moment 
assumption in Theorem~\ref{thm:boundary} is minimal as well. Indeed, it suffices to take 
$q_n\sim\frac{c\log n}{2^{n(p+d-1)}}$ and to assume that one of the vectors $\pm e_j$ belongs to the 
boundary of the cone $K$.

Theorems~\ref{thm:deep} and \ref{thm:boundary} describe the asymptotic behavior of $G_k(x,y)$ along all 
possible directions inside the cone $K$. Combining these two results, we conclude that, for all $x,x'\in K$,
\[
\frac{G(x,y)}{G(x',y)}\to\frac{V(x)}{V(x')}\quad\text{as }|y|\to\infty.
\]
As a result we have the following
\begin{corollary}
\label{thm:martin}
Assume that the assumptions of Theorem~\ref{thm:boundary} are valid. Then the function $V(x)$ is the 
unique, up to mutiplicative factor, positive harmonic function for $\{S(n)\}$ killed at leaving $K$.
\end{corollary}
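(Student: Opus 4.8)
The plan is to obtain Corollary~\ref{thm:martin} from Theorems~\ref{thm:deep} and~\ref{thm:boundary} by a Martin boundary argument. I would view $\{S(n)\}$ killed on leaving $K$ as a transient, irreducible Markov chain on the lattice points $K\cap R$; its Green function is precisely $G_K(x,y)$ as in~\eqref{T1}, and one checks $G_K(x,y)>0$ for all $x,y\in K\cap R$ under the standing assumptions. Fix a reference state $x_0\in K\cap R$ and set $M(x,y)=G_K(x,y)/G_K(x_0,y)$ for the Martin kernel. By the general Poisson--Martin representation theorem for transient chains, every nonnegative harmonic function $h$ of the killed walk has a representation
$$
h(x)=\int_{\partial_{\min}}M(x,\xi)\,\nu(d\xi),
$$
with $\nu$ a finite measure on the minimal Martin boundary $\partial_{\min}$. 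So it is enough to show that the Martin compactification adjoins a single point $\xi_0$ to $K\cap R$ and that $M(\cdot,\xi_0)=V(\cdot)/V(x_0)$: since $V$ is itself a strictly positive harmonic function, $\partial_{\min}\neq\emptyset$, hence $\partial_{\min}=\{\xi_0\}$, and then $h=\nu(\{\xi_0\})\,V/V(x_0)$ is a multiplicative factor times $V$, which is the assertion.

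To identify the Martin boundary, let $\{y_n\}\subset K\cap R$ be an arbitrary sequence that eventually leaves every finite subset of $K\cap R$; because $\partial K$ carries no states, this just means $|y_n|\to\infty$. I need $M(x,y_n)\to V(x)/V(x_0)$, and since $\overline{\Sigma}\subset\mathbb{S}^{d-1}$ is compact it suffices to treat a subsequence along which $y_n/|y_n|\to\sigma\in\overline{\Sigma}$. If $\sigma$ lies in the open set $\Sigma$, then $\mathrm{dist}(\sigma,\partial K)=:2\alpha>0$; using the homogeneity $\mathrm{dist}(y,\partial K)=|y|\,\mathrm{dist}(y/|y|,\partial K)$ valid for cones, we get $\mathrm{dist}(y_n,\partial K)\ge\alpha|y_n|$ for all large $n$, so Theorem~\ref{thm:deep} applies and yields $G_K(x,y_n)=(c+o(1))\,V(x)\,u(y_n)/|y_n|^{2p+d-2}$ with a factor independent of $x$. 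If $\sigma\in\partial\Sigma$, then Theorem~\ref{thm:boundary} applies (its hypotheses coincide with those of the corollary) and yields $G_K(x,y_n)=(c+o(1))\,V(x)\,v_\sigma(\mathrm{dist}(y_n,\partial K))/|y_n|^{p+d-1}$, again with an $x$-independent and strictly positive factor. In both cases the $x$-free factor cancels in the ratio defining $M$, so $M(x,y_n)\to V(x)/V(x_0)$. Hence the Martin boundary is the single point $\xi_0$ with $M(\cdot,\xi_0)=V(\cdot)/V(x_0)$, and the previous paragraph concludes the proof.

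Essentially all of the analytic difficulty sits in Theorems~\ref{thm:deep} and~\ref{thm:boundary}; what remains to verify here is only that those two statements \emph{together} describe $G_K(\cdot,y)$ along every escape route to infinity inside $K$. The case split ``$\sigma\in\mathrm{int}\,\Sigma$'' versus ``$\sigma\in\partial\Sigma$'' does this, with the homogeneity of the distance function under the cone scaling supplying the uniform constant $\alpha$ that Theorem~\ref{thm:deep} requires. Besides that one should confirm the routine prerequisites of the Martin representation --- transience of the killed walk and strict positivity of $G_K$ on $K\cap R$ --- which follow from the moment and lattice assumptions. I do not expect a genuine obstacle beyond this bookkeeping.
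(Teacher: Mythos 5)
Your proposal is correct and follows essentially the same route the paper takes: the paper likewise combines Theorems~\ref{thm:deep} and~\ref{thm:boundary} to show $G_K(x,y)/G_K(x',y)\to V(x)/V(x')$ along every sequence $|y|\to\infty$, and then invokes Martin compactification to get uniqueness. You merely spell out the standard Poisson--Martin bookkeeping (compactness of $\overline{\Sigma}$, the subsequence argument, minimality) that the paper leaves implicit.
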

Doney~\cite{Doney98} has shown that the harmonic function for any one-dimensional oscillating random
walk killed at leaving the positive half-axis is unique without any additional moment assumption.

For multidimensional cones much less is known. Raschel~\cite{Raschel14} has shown the uniqueness of the 
positive harmonic function for random walks with small steps killed at leaving the positive quadrant
$\mathbb{Z}_+^2$ and some particular cases of such random walks have been studied by the same author in
\cite{Raschel10} and \cite{Raschel11}. The approach in \cite{Raschel14} is based on a functional
equation which is satisfied by all harmonic functions. It should be also mentioned that \cite{Raschel10}
and \cite{Raschel11} describe actually the asymptotic behavior of the Green function and the uniqueness
of the harmonic function is just a consequence of the results on the Green function.

Uchiyama~\cite{Uchiyama14} derives asymptotics for the Green function for random walks in 
$\mathbb{Z}^{d-1}\times\mathbb{Z}_+$, see Theorem 5 in \cite{Uchiyama14}. He assumes that $\mathbf{E}|X|^d$ is finite.
This is slightly weaker than the moment assumption in Theorem~\ref{thm:boundary}, which reduces in the
case of a half-space to $\mathbf{E}|X|^{d+1}<\infty$. On the other hand, Theorem~\ref{thm:boundary} can
be applied to any half-space, and Uchiyama's proof uses in a crucial way that the half-space is precisely
$\mathbb{Z}^{d-1}\times\mathbb{Z}_+$, i.e. aligned with the lattice of the random walk.

Bouaziz, Mustapha and Sifi~\cite{BMS2015} have shown uniqueness for a wide class of random walks
with finite number of steps killed at leaving the orthant $\mathbb{Z}_+^d$, $d\geq2$.

Ignatiouk-Robert~\cite{Ignatiouk18} has studied the properties of harmonic function for random walks on
semigroups by introducing a special renewal structure. Applying this to a random walk in a cone, she
has shown that, if the cone $K$ satisfies the assumptions from \cite{DW15} with some $p\le 2$ and
$\mathbf{E}|X|^\alpha$ is finite for some $\alpha>2$, then the harmonic function is unique. From our results
on the Green function one can deduce uniqueness if it is assumed that
$\mathbf{E}|X|^{d+1}<\infty$. Therefore, for cones with $p\le 2$ the result in \cite{Ignatiouk18} holds
under much weaker restrictions on the random walk than Corollary~\ref{thm:martin}. This does not
provide any information on the asymptotic behavior of the Green function.

Raschel and Tarrago \cite{Raschel_Tarrago} have derived an implicit asymptotic representation for the
Green function of random walks in cones under the assumptions $p\ge1$ and $\mathbf{E}|X|^{r_1(p)+p}<\infty$.
This is the main difference between their assumptions and ours: we need less moments but we 
impose the condition that the cone is $C^2$. The latter excludes, for example, Weyl chambers,
which appear often in models from physics.

We conclude this section by describing our approach.
Using the local limit theorem for $S(n)$ killed at leaving $K$ one can easily determine the asymptotic
behaviour of the sum
$$
\sum_{n\ge\varepsilon|y|^2}\mathbf{P}(x+S(n)=y,\tau_x>n),
$$
for every fixed $\varepsilon>0$. Thus, the main problem in studying $G_K(x,y)$ consists in estimating
the sum of local probabilities over $n\le \varepsilon |y|^2$. The local limit theorem is useless in
this domain, since $y$ belongs to the region of large deviations. For unconditioned one-dimensional
transient random walks Caravenna and Doney~\cite{Caravenna-Doney16} estimate every probability
separately to obtain asymptotics for the Green function. Our strategy is completely different:
we consider the sum of probabilities as the expectation of the number of visits to $y$ up to time 
$\varepsilon|y|^2$. In the proof of Theorem~\ref{thm:deep} we derive an upper bound for this
expectation in terms of the Green function $G_{\mathbb{R}^d}$ and apply the functional limit theorems
from \cite{DuW15}. It turns out that if $y$ goes to infinity along the boundary then the estimate
via $G_{\mathbb{R}^d}$ becomes too rough. For that reason we first consider the special case when $K$
is a half-space. In this situation we derive the asymptotic for $G_K$ via appropriate estimates
for local probabilities of large deviations. This part is similar to the approach in
\cite{Caravenna-Doney16}. Given the well-known asymptotic of Green functions for half-spaces we follow the same
strategy as in the proof of Theorem~\ref{thm:deep} to estimate the expected number of visits to $y$
in terms of the Green function for the half-space whose normal is perpendicular to the direction of
convergence for the Green function. For that estimate we need to assume that
$K$ is convex and $C^2$.

\section{Asymptotic behavior inside the cone.}
\begin{proof}[Proof of Theorem \ref{thm:deep}]
Fix some $\varepsilon>0$ and split $G_K(x,y)$ into two parts:
\begin{align*}
G_K(x,y)&=\sum_{n<\varepsilon|y|^2}\mathbf{P}(x+S(n)=y,\tau_x>n)
+\sum_{n\ge\varepsilon |y|^2}\mathbf{P}(x+S(n)=y,\tau_x>n)\\
&=:S_1(x,y,\varepsilon)+S_2(x,y,\varepsilon).
\end{align*}
By Theorem 5 in \cite{DW15}, 
$$
n^{p/2+d/2}\mathbf{P}(x+S(n)=y,\tau_x>n)=
\varkappa H_0 V(x)u\left(\frac{y}{\sqrt{n}}\right)e^{-|y|^2/2n}+o(1)
$$
uniformly in $y\in K$. Consequently, as $|y|\to\infty$, 
\begin{align*}
S_2(x,y,\varepsilon)
&=\varkappa H_0 V(x)\sum_{n\ge \varepsilon|y|^2}\frac{1}{n^{p/2+d/2}}
u\left(\frac{y}{\sqrt{n}}\right)e^{-|y|^2/2n}+
o\left(\sum_{n\ge \varepsilon|y|^2}\frac{1}{n^{p/2+d/2}}\right)\\
&=\varkappa H_0 V(x)u(y)\sum_{n\ge \varepsilon|y|^2}\frac{1}{n^{p+d/2}}e^{-|y|^2/2n}+
o\left(|y|^{-p-d+2}\right)\\
&=\varkappa H_0 V(x)u(y)|y|^{-2p-d+2}\int_\varepsilon^\infty z^{-p-d/2}e^{-1/(2z)}dz+
o\left(|y|^{-p-d+2}\right).
\end{align*}
Letting here $\varepsilon\to0$ and recalling that $u(y)\ge c(\alpha)|y|^p$ for $dist(y,\partial K)\ge \alpha|y|$,
we obtain
\begin{equation}
\label{lim_S_2}
\lim_{\varepsilon\to0}\lim_{|y|\to\infty}\frac{|y|^{2p+d-2}}{u(y)}S_2(x,y,\varepsilon)
=\varkappa H_0 V(x)\int_0^\infty z^{-p-d/2}e^{-1/(2z)}dz.
\end{equation}
Therefore, it remains to show that
\begin{align}
\label{lim_S_1}
\lim_{\varepsilon\to0}\limsup_{|y|\to\infty}\frac{|y|^{2p+d-2}}{u(y)}S_1(x,y,\varepsilon)=0.
\end{align}
Fix additionally some small $\delta>0$ and define
$$
\theta_y:=\inf\{n\ge1:x+S(n)\in B_{\delta,y}\},
$$
where $B_{\delta,y}$ denotes the ball of radius $\delta|y|$ around point $y$.

Then we have
\begin{align}
\label{S1_estim_1}
\nonumber
&S_1(x,y,\varepsilon)=\sum_{n<\varepsilon|y^2|}\mathbf{P}(x+S(n)=y,\tau_x>n\ge\theta_y)\\
\nonumber
&=\sum_{n<\varepsilon|y^2|}\sum_{k=1}^n\sum_{z\in B_{\delta,y}}
\mathbf{P}(x+S(k)=z,\tau_x>k=\theta_y)\mathbf{P}(z+S(n-k)=y,\tau_z>n-k)\\
\nonumber
&\le \sum_{k<\varepsilon|y|^2}\sum_{z\in B_{\delta,y}}
\mathbf{P}(x+S(k)=z,\tau_x>k=\theta_y)\sum_{j<\varepsilon|y|^2-k}\mathbf{P}(z+S(j)=y)\\
&\le\mathbf{E}\left[G^{(\varepsilon|y|^2)}(y-x-S(\theta_y));\tau_x>\theta_y,\theta_y\le\varepsilon|y|^2\right],
\end{align}
where
$$
G^{(t)}(z):=\sum_{n<t}\mathbf{P}(S(n)=z).
$$
We focus first on the case $d\ge3$. Then, according to Theorem 2 in Uchiyama~\cite{Uchiyama98},
\begin{equation}
\label{Green_full}
G(z):=G^{(\infty)}(z)\le\frac{C}{1+|z|^{d-2}},\quad z\in\mathbb{Z}^d,
\end{equation}
provided that $\mathbf{E}|X_1|^{s_d}<\infty$, where $s_d=2+\varepsilon$ for $d=3,4$ and
$s_d=d-2$ for $d\ge5$. Since $r_1(p)>s_d$, \eqref{Green_full} yields
\begin{align}
\label{S1_estim_2}
\nonumber
&S_1(x,y,\varepsilon)\\
&\le C\mathbf{E}\left[\frac{1}{1+|y-x-S(\theta_y)|^{d-2}};\tau_x>\theta_y,\theta_y\le\varepsilon|y|^2\right]\\
\nonumber
&\le C\mathbf{P}(|y-x-S(\theta_y)|\le\delta^2|y|,\tau_x>\theta_y,\theta_y\le\varepsilon|y|^2)
+\frac{C(\delta)}{|y|^{d-2}}\mathbf{P}(\tau_x>\theta_y,\theta_y\le\varepsilon|y|^2).
\end{align}
Noting now that $|y-x-S(\theta_y)|\le\delta^2|y|$ yields $|X(\theta_y)|>\delta(1-\delta)|y|$
and using our moment assumption, we conclude that
\begin{align}
\label{S1_estim_3}
\nonumber
&\mathbf{P}(|y-x-S(\theta_y)|\le\delta^2|y|,\tau_x>\theta_y,\theta_y<\varepsilon|y|^2)\\
\nonumber
&\hspace{1cm}\le \sum_{k<\varepsilon|y|^2}\mathbf{P}(|X(k)|>\delta(1-\delta)|y|,\tau_x>k=\theta_y)\\
\nonumber
&\hspace{1cm}\le \mathbf{P}(|X|>\delta(1-\delta)|y|)\sum_{k<\varepsilon|y|^2}\mathbf{P}(\tau_x>k-1)\\
&\hspace{1cm}=o\left(|y|^{-r_1(p)}\mathbf{E}[\tau_x;\tau_x<|y|^2]\right)=o\left(|y|^{-d-p+2}\right).
\end{align}
Recalling that $V$ is harmonic for $S(n)$ killed at leaving $K$, we obtain
\begin{align*}
&\mathbf{P}(\tau_x>\theta_y,\theta_y<\varepsilon|y|^2)\\
&\hspace{1cm}=\sum_{k<\varepsilon|y|^2}\sum_{z:|z-y|\le\delta|y|}\mathbf{P}(\tau_x>k,\theta_y=k,x+S(k)=z)\\
&\hspace{1cm}=\sum_{k<\varepsilon|y|^2}\sum_{z:|z-y|\le\delta|y|}\frac{V(x)}{V(z)}\mathbf{P}^{(V)}(\theta_y=k,x+S(k)=z)\\
&\hspace{1cm}\le\frac{V(x)}{\min_{\{z:|z-y|\le\delta|y|\}}V(z)}\mathbf{P}^{(V)}(\theta_y<\varepsilon|y|^2).
\end{align*}
It follows from the assumption $dist(y,\partial K)\ge\alpha|y|$ and Lemma 13 in \cite{DW15}, that
$$
\min_{\{z:|z-y|\le\delta|y|\}}V(z)\ge C|y|^p
$$
for all $\delta$ sufficiently small. As a result,
$$
|y|^p \mathbf{P}(\tau_x>\theta_y,\theta_y<\varepsilon|y|^2)
\le C(x)\mathbf{P}^{(V)}\left(\max_{n<\varepsilon|y|^2}|x+S(n)|>(1-\delta)|y|\right).
$$
Applying now the functional limit theorem for $S(n)$ under $\mathbf{P}^{(V)}$, see Theorem 2 and Corollary 3
in \cite{DuW15}, we conclude that
\begin{align}
\label{S1_estim_4}
\lim_{\varepsilon\to0}\limsup_{|y|\to\infty} |y|^p \mathbf{P}(\tau_x>\theta_y,\theta_y<\varepsilon|y|^2)=0.
\end{align}
Note that the functional limit theorem from \cite{DuW15} only requires $p\vee (2+\epsilon)$-moments.

Combining \eqref{S1_estim_2}--\eqref{S1_estim_4}, we infer that \eqref{lim_S_1} is valid under the 
assumption $\mathbf{E}|X_1|^{r_1(p)}<\infty$ in all dimensions $d\ge 3.$

Assume now that \eqref{local_assump} holds. It is clear that this restriction implies that
$\mathbf{E}|X_1|^p<\infty$. Therefore, Theorem 5 in \cite{DW15} is still applicable and \eqref{lim_S_2}
remains valid for all random walks satisfying \eqref{local_assump}. In order to show that \eqref{lim_S_1}
remains valid as well, we notice that
\begin{align*}
&S_1(x,y,\varepsilon)\\
&\hspace{0.5cm}\le C\mathbf{E}\left[\frac{1}{1+|y-x-S(\theta_y)|^{d-2}};|y-x-S(\theta_y)|\le\delta^2|y|,\tau_x>\theta_y,\theta_y\le\varepsilon|y|^2\right]\\
&\hspace{1.5cm}+\frac{C(\delta)}{|y|^{d-2}}\mathbf{P}(\tau_x>\theta_y,\theta_y\le\varepsilon|y|^2).
\end{align*}
In view of \eqref{S1_estim_4}, we have to estimate the first term on the right hand side only. 
For every $z$ such that $|z-y|\le \delta^2|y|$ we have
\begin{align*}
&\mathbf{P}(x+S(\theta_y)=z,\tau_x>\theta_y,\theta_y\le \varepsilon|y|^2)\\
&\hspace{1cm}\le\sum_{k=1}^{\varepsilon|y|^2}\sum_{z'\in K\setminus B_{\delta,y}}
\mathbf{P}(x+S(k-1)=z',\tau_x>k-1)\mathbf{P}(X_k=z-z').
\end{align*}
Since $|z-z'|>\delta(1-\delta)|y|$, we infer from \eqref{local_assump} that
\begin{align}
\label{local_1}
\nonumber
&\mathbf{P}(x+S(\theta_y)=z,\tau_x>\theta_y,\theta_y\le \varepsilon|y|^2)\\
\nonumber
&\hspace{1cm}\le C(\delta)|y|^{-p-d+1}f(\delta(1-\delta)|y|)\sum_{k=1}^{\varepsilon|y|^2}\mathbf{P}(\tau_x>k-1)\\
&\hspace{1cm} \le C(\delta)|y|^{-p-d+1}f(\delta(1-\delta)|y|)\mathbf{E}[\tau_x;\tau_x<|y|^2].
\end{align}

Here and in the following we use that $\mathbf{E}[\tau_x;\tau_x<|y|^2]\sim C|y|^{-p+2}$ if $p\le 2$. 

For every natural $m$ there are $O(m^{d-1})$ lattice points $z$ such that $|z-y|\in(m,m+1]$.
Then, using \eqref{local_1}, we obtain
\begin{align*}
&\mathbf{E}\left[\frac{1}{1+|y-x-S(\theta_y)|^{d-2}};|y-x-S(\theta_y)|\le\delta^2|y|,\tau_x>\theta_y,\theta_y\le\varepsilon|y|^2\right]\\
&\hspace{1cm}\le C(\delta)|y|^{-p-d+1}f(\delta(1-\delta)|y|)\mathbf{E}[\tau_x;\tau_x<|y|^2]\sum_{m=1}^{\delta^2|y|}\frac{m^{d-1}}{1+m^{d-2}}\\
&\hspace{1cm} \le C(\delta)|y|^{-p-d+3}f(\delta(1-\delta)|y|)\mathbf{E}[\tau_x;\tau_x<|y|^2].
\end{align*}
Recalling that $u^{(3-p)\vee1}f(u)\to0$, we conclude that
\begin{align*}
&\mathbf{E}\left[\frac{1}{1+|y-x-S(\theta_y)|^{d-2}};|y-x-S(\theta_y)|\le\delta^2|y|,\tau_x>\theta_y,\theta_y\le\varepsilon|y|^2\right]\\
&\hspace{2cm}=o(|y|^{-p-d+2}).
\end{align*}
This completes the proof of the theorem for $d\ge 3$.

We now focus on $d=2$.
In this case we can not use the full Green function.
We will obtain bounds for $G^{(t)}(x)$ directly from the local limit theorem for unrestricted walks.
More precisely, we shall use Propositions 9 and 10 from Chapter 2 in Spitzer's book \cite{Spitzer}, which say that
\begin{equation}
\label{eq_Spitzer}
\mathbf{P}(S_n=z)=\frac{1}{2\pi n}e^{-|z|^2/2n}+\frac{\rho(n,z)}{|z|^2\vee n},\quad \text{as } n\rightarrow \infty
\end{equation}
where
$$
\sup_{z\in\mathbb{Z}^2}\rho(n,z)\to0 ,\quad \text{as } n\rightarrow \infty.
$$
This asymptotic representation implies that
\begin{equation}
\label{Green_uniform}
\sup_{z\in\mathbb{Z}^2}G^{(t)}(z)\le C\log t,\quad t\ge2.
\end{equation}
Furthermore, for $|z|\to\infty$ and $t\le a|z|^2$ one has
\begin{equation*}
G^{(t)}(z)\le \sum_{n=1}^{a|z|^2}\frac{1}{2\pi n} e^{-|z|^2/2n}+o(1)
=\frac{1}{2\pi}\int_0^a\frac{1}{v}e^{-1/2v}dv+o(1).
\end{equation*}
As a result,
\begin{equation}
\label{Green_large}
\sup_{z\in\mathbb{Z}^2}G^{(a|z|^2)}(z)\le C(a)<\infty.
\end{equation}
Using \eqref{Green_uniform} and \eqref{Green_large}, we obtain
\begin{align*}
S_1(x,y,\varepsilon)
&\le C\log|y|\mathbf{P}(|y-x-S(\theta_y)|\le\delta^2|y|,\tau_x>\theta_y,\theta_y\le\varepsilon|y|^2)\\
&\hspace{2cm}+C(\varepsilon)\mathbf{P}(\tau_x>\theta_y,\theta_y\le\varepsilon|y|^2)
\end{align*}
According to \eqref{S1_estim_3},
\begin{align*}
\mathbf{P}(|y-x-S(\theta_y)|\le\delta^2|y|,\tau_x>\theta_y,\theta_y\le\varepsilon|y|^2)
&=o(|y|^{-r_1(p)}\mathbf{E}[\tau_x;\tau_x<|y|^2])\\
&=o(|y|^{-p}/\log|y|).
\end{align*}
Combining this with \eqref{S1_estim_4}, we conclude that \eqref{lim_S_1} holds for $d=2$.
\end{proof}
\section{Random walks in a half-space}
In this section we shall consider a particular cone
$$
K=\left\{x\in\mathbb{R}^d:x_d>0\right\}.
$$
Since the rotations of the space do not affect our moment assumptions, the results of this section
remain valid for any half-space in $\mathbb{R}^d$.

For this very particular cone we have
\begin{itemize}
 \item $u(x)=x_d$;
 \item $\tau_x=\inf\{n\ge1:x_d+S_d(n)\le0\}$;
 \item $V(x)$ depends on $x_d$ only and is proportional to the renewal function of ladder heights of 
       the random walk $\{S_d(n)\}$.
\end{itemize}
In other words, the exit problem from $K$ is actually a one-dimensional problem. This allows the use of exiting results for one-dimensional walks. As a result we obtain the asymptotic behavior 
of the Green function.
\begin{theorem}
\label{thm:half-space}
Assume that $\mathbf{E}|X|^{d+1}<\infty$. Assume also that $x=(0,\ldots,0,x_d)$ with $x_d=o(|y|)$.
Then 
$$
G_K(x,y)\sim c\frac{V(x)V'(y)}{|y|^d}.
$$
Here, $V'$ is the harmonic function for the killed reversed random walk $\{-S(n)\}$.
\end{theorem}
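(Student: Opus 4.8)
The plan is to reuse the architecture of the proof of Theorem~\ref{thm:deep}: fix $\varepsilon>0$, write $G_K(x,y)=S_1(x,y,\varepsilon)+S_2(x,y,\varepsilon)$ with the cut at $n=\varepsilon|y|^2$, treat the central range $n\ge\varepsilon|y|^2$ by a local limit theorem and the large‑deviation range $n<\varepsilon|y|^2$ by an upper bound, and then let $\varepsilon\to0$. The point is that, since $y$ may now approach $\partial K$, the estimates of Theorem~\ref{thm:deep} — calibrated to $\mathrm{dist}(y,\partial K)\asymp|y|$ — are too crude and must be sharpened so as to hold uniformly up to the boundary. I will use the half‑space structure throughout: here $p=1$ and $u(y)=y_d$, the killing depends only on the $d$‑th coordinate, so $\{x_d+S_d(n)\}$ is a genuine one‑dimensional walk killed on $(0,\infty)$ whose fine fluctuation theory is available; reversal gives $G_K(x,y)=\sum_n\mathbf{P}(y+\widetilde S(n)=x,\widetilde\tau_y>n)$ with $\widetilde S=-S$, letting a near‑boundary point serve as a starting point; and the hypothesis $x_d=o(|y|)$ makes $x$ negligible against points of size $\asymp\sqrt n\asymp|y|$ occurring below.

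\emph{Central range.} For $n\ge\varepsilon|y|^2$ we have $y_d=o(\sqrt n)$ (for fixed $\varepsilon$), so $y$ is near $\partial K$ on scale $\sqrt n$, and there the local limit theorem of \cite{DW15} only yields $\mathbf{P}(x+S(n)=y,\tau_x>n)=o(n^{-(d+1)/2})$ — its main term $u(y/\sqrt n)e^{-|y|^2/2n}\asymp (y_d/|y|)e^{-|y|^2/2n}$ being absorbed by the error. What is needed is the sharper
\[
\mathbf{P}(x+S(n)=y,\tau_x>n)=\frac{V(x)\,V'(y)}{n^{d/2+1}}\bigl(g(|y|^2/n)+o(1)\bigr),\qquad n\ge\varepsilon|y|^2,
\]
uniformly, with the reversed harmonic function $V'(y)$ in place of the vanishing surface term; the rate $n^{-(d+2)/2}=n^{-(d-1)/2}\cdot n^{-3/2}$ is the free local CLT in the $d-1$ transverse directions times the $n^{-3/2}$ of the one‑dimensional killed walk in the $d$‑th, and $g$ is the ensuing convolution of a Gaussian with the near‑boundary renewal profile of $\{S_d(n)\}$. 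I would derive this either directly — from the classical local asymptotics for the one‑dimensional killed walk combined with a conditional local central limit theorem for the transverse coordinates given the $d$‑th coordinate path, legitimate because $\mathrm{cov}(X_i,X_j)=0$ — or by splitting the path at its midpoint and reducing, via \cite{DW15}'s \emph{bulk} local limit theorem applied to each half, to the product of the two boundary factors $V(x)$ and $V'(y)$. Summing over $n\ge\varepsilon|y|^2$ and letting $\varepsilon\to0$ gives $S_2(x,y,\varepsilon)\sim c\,V(x)V'(y)|y|^{-d}$, the constant $c$ being, up to the local‑limit and renewal normalisations, $\int_0^\infty s^{d/2-1}g(s)\,ds<\infty$. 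I expect this near‑boundary local limit theorem to be the analytic heart of the argument.

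\emph{Large‑deviation range.} It remains to prove $\lim_{\varepsilon\to0}\limsup_{|y|\to\infty}|y|^dS_1(x,y,\varepsilon)/V'(y)=0$. For $n<\varepsilon|y|^2$ the endpoint $y$ lies at distance $\gg\sqrt n$ from $x$, so reaching it forces a single anomalously large increment; following the template of \cite{Caravenna-Doney16} (and of \eqref{S1_estim_1}--\eqref{S1_estim_3}), I would bound $S_1$ by a sum over the jump time $k$ and the post‑jump location $w$ of [the probability of surviving in $K$ up to time $k-1$ and jumping to $w$] times [the truncated killed Green function $\sum_{j<\varepsilon|y|^2}\mathbf{P}(w+S(j)=y,\tau_w>j)$]. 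Two points are essential: one must keep the killing in the second factor rather than replace it by $G_{\mathbb{R}^d}$, which is too lossy near $\partial K$; and the first factor is controlled using $\mathbf{P}(|X|\gtrsim|y|)=o(|y|^{-d-1})$, so that the moment assumption $\mathbf{E}|X|^{d+1}<\infty$ — one more moment than in Theorem~\ref{thm:deep}, reflecting that along the boundary $G_K$ decays like $|y|^{-d}$ rather than $|y|^{1-d}$ — is exactly consumed here; together with $\sum_{k<\varepsilon|y|^2}\mathbf{P}(\tau_x>k-1)=\mathbf{E}[\tau_x;\tau_x<|y|^2]\asymp|y|$ (as $p=1$) this forces $S_1=o(V'(y)|y|^{-d})$ after $\varepsilon\to0$. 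Note that \eqref{S1_estim_4} cannot simply be invoked, as it was proved under $\mathrm{dist}(y,\partial K)\ge\alpha|y|$; the needed control comes from the direct large‑deviation estimate instead.

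\emph{Low dimensions and conclusion.} For $d=2$ the killed Green bounds above are replaced by the Spitzer‑type estimates \eqref{eq_Spitzer}--\eqref{Green_large} exactly as in the proof of Theorem~\ref{thm:deep}, while $d=1$ is classical renewal theory; strict positivity of $V'$ on $K$ follows from its being a renewal function. Combining the two ranges yields $G_K(x,y)=S_1+S_2\sim c\,V(x)V'(y)|y|^{-d}$. This half‑space result — resting squarely on the one‑dimensional nature of the killing and on reversal — is the base case from which the general convex $C^2$ cone of Theorem~\ref{thm:boundary} is afterwards bootstrapped.
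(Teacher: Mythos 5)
Your treatment of the central range is on target: the key ingredient is exactly the paper's Lemma~\ref{lem:half-space-2}, the near-boundary local limit theorem $\mathbf{P}(x+S(n)=y,\tau_x>n)\sim c\,V(x)V'(y)n^{-(d+2)/2}e^{-|y|^2/2n}$ uniformly for $y_d=o(\sqrt n)$, and the paper obtains it by precisely the midpoint split you propose, applying Lemma~\ref{lem:half-space}(c) to the reversed walk $S'=-S$ at time $m=[n/2]$ and then using the meander convergence of Lemma~\ref{lem:half-space}(b). (Your alternative route via a ``conditional local CLT for the transverse coordinates given the $d$-th coordinate path'' is shakier: $\mathrm{cov}(X_i,X_j)=0$ does not make the coordinates independent, so that conditioning is not automatically harmless; better to stick with the midpoint split.)

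The genuine gap is in the large-deviation range. Your bound on $S_1$ multiplies the survival-and-jump probability by the truncated \emph{killed} Green function $\sum_{j<\varepsilon|y|^2}\mathbf{P}(w+S(j)=y,\tau_w>j)$ from the post-jump site $w$ to $y$, insisting that one ``keep the killing'' rather than use $G_{\mathbb{R}^d}$. But a sharp near-boundary bound on this killed Green function is precisely what Theorem~\ref{thm:half-space} asserts; at this base-case stage no such a priori bound exists, so the step is circular. (In the paper this $\theta_y$-type decomposition with the half-space Green function as input \emph{is} used — but only later, in the proof of Theorem~\ref{thm:boundary}, after Theorem~\ref{thm:half-space} has been established.) Moreover, your one-jump heuristic only controls paths that actually contain a jump of size $\gtrsim|y|$, and you give no argument for the remaining paths. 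The paper instead rotates so that $y_1\sim|y|$, splits on $\{\max_{k\le n}|X_1(k)|\le\gamma y_1\}$, and handles the two pieces differently: the big-jump piece uses the crude concentration bound $\max_z\mathbf{P}(S(n-k)=z)\le C(n-k+1)^{-d/2}$ (no killed Green function at all) together with $\mathbf{E}|X|^{d+1}<\infty$; the no-big-jump piece — which your outline does not address and which is the real work — requires the Fuk--Nagaev exponential tilt $\overline{\mathbf{P}}$ in the $y_1$-direction, Esseen's concentration-function bound under $\overline{\mathbf{P}}$, and, for $n\in[|y|^{2-r},\varepsilon|y|^2]$, the refined exit estimate $\overline{\mathbf{P}}(\tau_x>n/3)\le C(1+x_d)/\sqrt n$ proved through moment control of the tilted walk and Mogul'skii's overshoot inequality. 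This tilting machinery, not a one-jump bound with a killed Green factor, is what closes the half-space case.
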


The proof of this result is based on the following simple generalization of known results for cones.
\begin{lemma}
\label{lem:half-space}
Assume that $\mathbf{E}|X|^{2+\delta}$ is finite. Then, uniformly in $x\in K$ with $x_d=o(\sqrt{n})$
\begin{itemize}
 \item [(a)] $\mathbf{P}(\tau_x>n)\sim \varkappa V(x)n^{-1/2}$;
 \item [(b)] $\{\frac{x+S([nt])}{\sqrt{n}}\},\,t\in[0,1]$ conditioned on $\{\tau_x>n\}$ converges
             weakly to the Brownian meander in $K$;
 \item [(c)] $\sup_{y\in K}\Big|n^{1/2+d/2}\mathbf{P}(x+S(n)=y;\tau_x>n)-cV(x)\frac{y_d}{\sqrt{n}}e^{-|y-x|^2/2n}\Big|\to0$.         
\end{itemize}
\end{lemma}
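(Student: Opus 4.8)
The plan is to reduce all three assertions to known results — one-dimensional fluctuation theory for part~(a), and the functional and local limit theorems for random walks in general cones for parts~(b) and~(c) — the only genuinely new point being uniformity over the range $x_d=o(\sqrt n)$. The organising remark is the one recorded just before the lemma: for the half-space $K=\{x_d>0\}$ the exit time $\tau_x=\inf\{n\ge1:x_d+S_d(n)\le0\}$ is a functional of the single coordinate process $\{S_d(n)\}$, and $V(x)$ depends on $x_d$ only, being a multiple of the renewal function of the ladder heights of $\{S_d(n)\}$; in particular $V(x_d)\sim c\,x_d$ as $x_d\to\infty$, and here $p=1$, $u(x)=x_d$.

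Part~(a) is then purely one-dimensional. For a fixed $x$, $\mathbf{P}(\tau_x>n)\sim\varkappa V(x)n^{-1/2}$ is classical for a centred finite-variance walk and is also the $p=1$ instance of the tail asymptotics of \cite{DW15}. To upgrade it to uniformity over $x_d=o(\sqrt n)$ I would decompose the trajectory of $x_d+S_d(\cdot)$ at the first time it exceeds a large fixed level $L$, use the harmonicity of $V$ together with $V(x_d)/x_d\to c$ to compare $\mathbf{P}(\tau_x>n)$ with the probability starting from a level of order $L$, and then let $L\to\infty$ slowly with $n$; this is a routine renewal-type estimate.

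For part~(b), Donsker's theorem and the normalisation $\mathrm{cov}(X_i,X_j)=\delta_{ij}$ give that $\{(x+S([nt]))/\sqrt n\}_{t\in[0,1]}$ converges in law to a standard $d$-dimensional Brownian motion; the event $\{\tau_x>n\}$ constrains the last coordinate only, and after rescaling it becomes the conditioning of that coordinate to stay positive. The limiting process is the Doob $h$-transform by $u(x)=x_d$ of $d$-dimensional Brownian motion killed on $\partial K$, i.e. a $(d-1)$-dimensional Brownian motion run independently of a one-dimensional Brownian meander, which is precisely the Brownian meander in $K$; the convergence is the functional limit theorem of \cite{DuW15} for walks conditioned on $\{\tau_x>n\}$, specialised to $K=\{x_d>0\}$, and since $p=1$ it requires only $p\vee(2+\delta)=2+\delta$ finite moments. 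For part~(c), the local limit theorem for cones, Theorem~5 in \cite{DW15}, again with $p=1$ and $u(z)=z_d$, gives for fixed $x$ precisely $n^{1/2+d/2}\mathbf{P}(x+S(n)=y;\tau_x>n)=cV(x)\,y_d\,n^{-1/2}e^{-|y|^2/2n}+o(1)$ uniformly in $y$; and since in the regime of interest $|x|=o(\sqrt n)$ one has $e^{-|y-x|^2/2n}=e^{-|y|^2/2n}(1+o(1))$ uniformly over the range $|y|=O(\sqrt n)$ where the statement has content, so the two forms coincide. Alternatively (c) can be re-derived from (a) and (b) by splitting the trajectory at time $\lfloor n/2\rfloor$, applying the meander limit both forwards from $x$ and, after time reversal, backwards from $y$ to place the walk at a macroscopic point at time $n/2$ with density governed by the meander, and gluing the two halves via the ordinary local limit theorem for the unrestricted walk.

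The main obstacle is not any individual ingredient — each is known for cones — but the uniformity in the starting point: the theorems of \cite{DW15,DuW15} are stated for a fixed $x$, whereas here (a)--(c) must hold uniformly over all $x$ with $x_d=o(\sqrt n)$. Making this precise means revisiting the couplings and dyadic decompositions underlying those proofs and checking that the error terms do not deteriorate so long as $x_d=o(\sqrt n)$. Conceptually this is transparent — the meander limit forgets an initial height of order $o(\sqrt n)$ — but it is where essentially all of the (modest) work of the lemma lies.
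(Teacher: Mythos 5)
Your outline correctly identifies the ingredients (one-dimensional fluctuation theory for (a), the invariance principle of \cite{DuW15} for (b), the local limit theorem of \cite{DW15} for (c)) and correctly locates the crux: the cited theorems are stated for fixed $x$, so the real content of the lemma is uniformity over $x_d=o(\sqrt n)$. But you stop precisely where the paper's proof actually does its (small amount of) work, saying only that one must ``revisit the couplings and dyadic decompositions'' and check that error terms do not deteriorate. That is the gap.

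Two concrete points of divergence. First, for (a) the paper does not build a new decomposition argument: it simply cites Corollary~3 of Doney \cite{D12}, which already gives $\mathbf{P}(\tau_x>n)\sim\varkappa V(x)n^{-1/2}$ \emph{uniformly} in the starting height in the regime $x_d=o(\sqrt n)$ for a centred finite-variance one-dimensional walk. Your proposed excursion-decomposition at a level $L\to\infty$ is plausible but re-derives a known theorem rather than using it. Second, and more importantly, for (b) and (c) the paper pinpoints a single step in \cite{DW15,DuW15} that breaks for growing $x_d$: Lemma~24 of \cite{DW15}, which controls
$\mathbf{E}\bigl[|x+S(\nu_n)|;\tau_x>\nu_n,\,|x+S(\nu_n)|>\theta_n\sqrt n,\,\nu_n\le n^{1-\varepsilon}\bigr]$,
and whose original proof uses the Markov inequality for $\sum_{j\le n^{1-\varepsilon}}\mathbf{P}(\tau_x>j-1)$ in a way that is not uniform in $x$. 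The paper's fix is exactly to feed part~(a) back into this sum, getting a bound $\sum_{j\le n^{1-\varepsilon}}\mathbf{P}(\tau_x>j-1)\le C V(x) n^{(1-\varepsilon)/2}$ uniform in $x_d\le\theta_n\sqrt n/2$ (and to note the estimate is trivial when $x_d\ge n^{1/2-\varepsilon}$, since then $\nu_n=0$). That $p=1$ here so one can substitute part~(a) for the Markov bound is the specific mechanism by which uniformity is salvaged; your sketch gestures at the need for it without supplying it.

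A minor point on (c): the statement involves $e^{-|y-x|^2/2n}$ whereas Theorem~5 of \cite{DW15} gives $e^{-|y|^2/2n}$, and your justification ``since $|x|=o(\sqrt n)$'' does not follow from the hypothesis, which only constrains $x_d$. The correct remark is that one may first translate in the $d-1$ tangential coordinates (these do not affect $\tau_x$ or $V$) to reduce to $x=(0,\ldots,0,x_d)$, after which $|x|=x_d=o(\sqrt n)$ and the two Gaussian factors agree up to $1+o(1)$ uniformly.
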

\begin{proof}
The first statement is the well-known result for one-dimensional random walks, see Corollary 3 in
Doney~\cite{D12}. The second and the third 
statements for fixed starting points $x$ have been proved in \cite{DuW15} and in \cite{DW15} respectively.
To consider the case of growing $x_d$ one has to make only one change: Lemma 24 from \cite{DW15} should
be replaced by the estimate
$$
\lim_{n\to\infty}\frac{1}{V(x)}
\mathbf{E}\left[|x+S(\nu_n)|;\tau_x>\nu_n,|x+S(\nu_n)|>\theta_n\sqrt{n},\nu_n\le n^{1-\varepsilon}\right]=0
$$
uniformly in $x_d\le \theta_n\sqrt{n}/2.$ If $x_d\ge n^{1/2-\varepsilon}$ then $\nu_n=0$ and the 
expectation equals zero. If $x_d\le n^{1/2-\varepsilon}$ then one repeats the proof of Lemma 24 in 
\cite{DW15} with $p$ replaced by $1$ and uses the part (a) of the lemma to obtain a uniform in $x_d$ estimate
for the sum $\sum_{j\le n^{1-\varepsilon}}\mathbf{P}(\tau_x>j-1)$. (In \cite{DW15}, the Markov
inequality has been used, since one does not have the statement (a) in general cones.)
\end{proof}
\begin{lemma}
\label{lem:half-space-2}
Uniformly in $y$ with $y_d=o(\sqrt{n})$,
$$
\mathbf{P}(x+S(n)=y,\tau_x>n)\sim c\frac{V(x)V'(y)}{n^{1+d/2}}e^{-|y|^2/2n}.
$$
\end{lemma}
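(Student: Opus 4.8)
The plan is to cut the time interval at its midpoint $m=\lfloor n/2\rfloor$ and use part~(c) of Lemma~\ref{lem:half-space} on each half: forwards on $[0,m]$ to produce the factor $V(x)$, and, after time reversal, on $[m,n]$ to produce $V'(y)$. Decomposing over the position $z=x+S(m)$,
\[
\mathbf{P}(x+S(n)=y,\tau_x>n)=\sum_{z\in K}\mathbf{P}(x+S(m)=z,\tau_x>m)\,\mathbf{P}(z+S(n-m)=y,\tau_z>n-m),
\]
one rewrites the second factor by the time-reversal identity $\mathbf{P}(z+S(k)=y,\tau_z>k)=\mathbf{P}(y+\widetilde S(k)=z,\widetilde\tau_y>k)$, valid for $z,y\in K$, where $\widetilde S$ is a random walk with increments distributed as $-X$ and $\widetilde\tau_y$ its exit time from $K$; since the hypotheses on $X$ are symmetric under $X\mapsto -X$, Lemma~\ref{lem:half-space} applies to $\widetilde S$, whose harmonic function killed at leaving $K$ is precisely $V'$. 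As $x_d=o(\sqrt m)$ and $y_d=o(\sqrt n)=o(\sqrt{n-m})$, Lemma~\ref{lem:half-space}(c) (applied to $S$ from $x$ and to $\widetilde S$ from $y$) gives, uniformly in $z\in K$,
\begin{align*}
\mathbf{P}(x+S(m)=z,\tau_x>m)&=cV(x)\,z_d\,m^{-1-d/2}e^{-|z-x|^2/2m}+o(m^{-1/2-d/2}),\\
\mathbf{P}(z+S(n-m)=y,\tau_z>n-m)&=cV'(y)\,z_d\,(n-m)^{-1-d/2}e^{-|z-y|^2/2(n-m)}+o((n-m)^{-1/2-d/2}).
\end{align*}

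Fix a large $A$ and first treat $|z|\le A\sqrt n$. There the product of the two main terms is slowly varying in $z$ on the scale $\sqrt n$, so this partial sum is a Riemann sum for
\[
c^2V(x)V'(y)\,(m(n-m))^{-1-d/2}\int_{K}z_d^2\,e^{-|z-x|^2/2m-|z-y|^2/2(n-m)}\,dz .
\]
Using $x_d,y_d=o(\sqrt n)$, this Gaussian integral over the half-space factorises: the $d-1$ transverse directions contribute $\bigl(2\pi m(n-m)/n\bigr)^{(d-1)/2}e^{-|y|^2/2n}(1+o(1))$, while the normal direction contributes $\int_0^\infty t^2 e^{-t^2 n(1+o(1))/2m(n-m)}\,dt\sim c(m(n-m)/n)^{3/2}$; collecting powers of $n$ reproduces $c\,V(x)V'(y)n^{-1-d/2}e^{-|y|^2/2n}$, with the constant read off from the integral. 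The remaining cross terms are negligible: using $\sum_{z\in K}z_d e^{-c|z|^2/n}=O(n^{(d+1)/2})$ the main$\times o(\cdot)$ contributions are $o(n^{-1-d/2})$, and over the $O(A^d n^{d/2})$ lattice points of the ball the $o(\cdot)\times o(\cdot)$ contribution is $O(A^d)\cdot o(n^{-1-d/2})$, hence $o(n^{-1-d/2})$ for each fixed $A$ as $n\to\infty$.

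What remains --- and what I expect to be the main obstacle --- is to show that the tail $|z|>A\sqrt n$, multiplied by $n^{1+d/2}$, vanishes as $A\to\infty$. The uniform remainder in Lemma~\ref{lem:half-space}(c) is \emph{not} summable in $z$, so it cannot be used naively for large $|z|$; moreover any bound obtained by discarding the constraint $\{\tau_x>n\}$ on one of the two halves is too crude, as it loses the $n^{-1/2}$ decay carried by that boundary (note $\mathbf{P}(\tau_x>n)\asymp n^{-1/2}$) and overestimates by a factor of order $n^{1/2}$. One must therefore keep the staying-in-$K$ constraint on \emph{both} halves. Doing so, the product of the two main terms over $|z|>A\sqrt n$ is bounded by $e^{-cA^2}$ (from the factor $e^{-|z-x|^2/2m}$) times the main asymptotic, which vanishes after multiplication by $n^{1+d/2}$ as $A\to\infty$; the main$\times o$ terms are handled as before; and the only delicate piece, $o(\cdot)\times o(\cdot)$, is split once more at $|z|=B\sqrt n$ with $B=B(A)\to\infty$ slowly --- on $A\sqrt n<|z|\le B\sqrt n$ it is $O(B^d)\,o(n^{-1-d/2})$, while the far range $|z|>B\sqrt n$ is absorbed by a priori estimates for the unrestricted walk (the refined local limit theorem with Gaussian-type tails in the moderate range, and moment and maximal inequalities in the large-deviation range), where the moment assumption $\mathbf{E}|X|^{d+1}<\infty$ is used. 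Letting $n\to\infty$ and then $A\to\infty$ completes the proof.
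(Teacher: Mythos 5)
Your decomposition at $m=[n/2]$ and the time-reversal identity $\mathbf{P}(z+S(k)=y,\tau_z>k)=\mathbf{P}(y+S'(k)=z,\tau'_y>k)$ are exactly the paper's starting point. The real divergence is in what you do next: you apply Lemma~\ref{lem:half-space}(c) to \emph{both} halves and then try to sum the product of two Gaussian approximations (plus errors) over $z$. The paper applies (c) only to the reversed half from $y$, leaving the forward half as an exact probability. The payoff is immediate: the error $o\bigl(V'(y)m^{-1/2-d/2}\bigr)$ from (c) is then summed against the actual probabilities $\mathbf{P}(x+S(n-m)=z,\tau_x>n-m)$, whose total is $\mathbf{P}(\tau_x>n-m)\asymp V(x)n^{-1/2}$ by part~(a), yielding an error of exactly the right order $o(V(x)V'(y)n^{-1-d/2})$ with nothing left over to control; the main term is rewritten as a conditional expectation $\mathbf{E}_x\bigl[\frac{S_d(n-m)}{\sqrt m}e^{-|S(n-m)-y|^2/2m}\mid\tau_x>n-m\bigr]$ and evaluated by the invariance principle (b), using the product structure of the meander in a half-space. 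There is no tail-in-$z$ to estimate at all.

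Your version, by contrast, does create the obstacle you flag: the uniform $o(n^{-1/2-d/2})$ remainders of (c) are not summable in $z$, so the $o(\cdot)\times o(\cdot)$ contribution over $|z|>A\sqrt n$ must be handled separately. The resolution you sketch has a genuine gap. For the far range you propose to fall back on ``a priori estimates for the unrestricted walk'' (refined LLT, moment/maximal inequalities); but any bound that drops a staying-in-$K$ constraint loses a factor $n^{1/2}$, which is precisely the loss you yourself identified as fatal two sentences earlier. To close the far tail you would need uniform Gaussian-type pointwise bounds for the \emph{killed} walk on both halves (something of the form $\mathbf{P}(x+S(m)=z,\tau_x>m)\le C V(x)(1+z_d)m^{-1-d/2}$, and its reversed analogue), which is a nontrivial extra input not contained in Lemma~\ref{lem:half-space}. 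Also, ``refined LLT with Gaussian-type tails'' is not available under the standing polynomial moment hypotheses --- only polynomial decay in $|z|/\sqrt n$ can be asserted --- so that phrase would need to be replaced by an actual moderate-deviation estimate. In short, the plan is plausible but would require substantially more work than the paper's route; applying (c) once and (b)+(a) once, rather than (c) twice, eliminates the problematic tail entirely and is the cleaner argument.
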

\begin{proof}
Set $m = [\frac{n}{2}]$ and write 
\begin{align*}
\pr(x+&S(n) = y,\tau_x>n) \\
&= \sum_{z\in K} \pr(x+S(n-m) = z,\tau_x>n-m)\pr(z+S(m) = y,\tau_z>m)\\
&=\sum_{z\in K} \pr(x+S(n-m) = z,\tau_x>n-m)\pr(y+S'(m) = z,\tau'_y>m),
\end{align*}
where
$$
S'(k)=-S(k),\quad k\ge0
$$
and
$$
\tau'_y:=\inf\{n\ge 1: y+S'(n)\notin K\}.
$$
Applying part (c) of Lemma \ref{lem:half-space} to the random walk $\{S'(n)\}$,
we obtain
\begin{align*}
\pr(x+&S(n) = y,\tau_x>n)\\
&=\frac{cV'(y)}{m^{1+d/2}}\sum_{z\in K}z_de^{-|z-y|^2/2m}\pr(x+S(n-m) = z,\tau_x>n-m)\\
&\hspace{1cm}+o\left(V'(y)m^{-1/2-d/2}\pr(\tau_x>n-m)\right).
\end{align*}
Using now Lemma~\ref{lem:half-space}(a), we get
\begin{align*}
\pr(x+&S(n) = y,\tau_x>n)\\
&=\frac{cV'(y)V(x)}{m^{1/2+d/2}(n-m)^{1/2}}
\e_x\left[\frac{S_d(n-m)}{\sqrt{m}}e^{-|S(n-m)-y|^2/2m}\Big|\tau_x>n-m\right]\\
&\hspace{1cm}+o\left(\frac{V(x)V'(y)}{m^{-1/2-d/2}(n-m)^{1/2}}\right).
\end{align*}
It follows from part (b) of the previous lemma that
\begin{align*}
&\e_x\left[\frac{S_d(n-m)}{\sqrt{m}}e^{-|S(n-m)-y|^2/2m}\Big|\tau_x>n-m\right]\\
&\hspace{1cm}\sim \e\left[\left(M_{K,d}(1)\right)e^{-|M_K-y/\sqrt{m}|^2/2}\right],
\end{align*}
where $M_K(t)=(M_{K,1}(t),M_{K,2}(t),\ldots,M_{K,d}(t))$ is the meander in $K$.

Since $K=\mathbb{R}^{d-1}\times\mathbb{R}_+$, all coordinates of $M_K$ are independent.
Furthermore, $M_{K,1}(t),\ldots,M_{K,d-1}(t)$ are Brownian motions and $M_{K,d}(t)$ is the 
one-dimensional Brownian meander. Combining these observations with $y_d=o(\sqrt{n})$, we
conclude that 
\begin{align*}
&\e\left[\left(M_{K,d}(1)\right)e^{-|M_K-y/\sqrt{m}|^2/2}\right]\\
&\hspace{1cm}
\sim \e\left[M_{K,d}(1)e^{-M_{K,d}^2/2}\right]\prod_{i=1}^{d-1}\e\left[e^{-(M_{K,i}(1)-y_i/\sqrt{m})^2/2}\right]\\
&\hspace{2cm}= C\prod_{i=1}^{d-1} e^{-y_i^2/4m}\sim Ce^{-|y|^2/2n}.
\end{align*}
This completes the proof.
\end{proof}

\begin{proof}[Proof of Theorem~\ref{thm:half-space}]
If $y$ is such that $y_d\ge\alpha |y|$ for some $\alpha>0$ then it suffices to repeat the proof of 
Theorem~\ref{thm:deep}. 

We consider then the 'boundary case' $y_d=o(|y|)$. 

Using Lemma~\ref{lem:half-space-2}, one obtains
easily
\begin{equation*}
\lim_{\varepsilon\to0}\lim_{|y|\to\infty}\frac{|y|^d}{V(x)v'(y)}S_2(x,y,\varepsilon)=c.
\end{equation*}
Namely, it follows that 
\begin{align*}
\lim_{\varepsilon\to0}&\lim_{|y|\to\infty}\frac{|y|^d}{V(x)v'(y)}S_2(x,y,\varepsilon) = c\lim_{\varepsilon\to0}\lim_{|y|\to\infty}\sum_{n\ge \epsilon|y|^2}|y|^dn^{-1-\frac{d}{2}}e^{-\frac{|y|^2}{2n}}\\&
= c\int_0^{\infty}\left(\frac{1}{v}\right)^{1+\frac{d}{2}}e^{-\frac{1}{2v}}dv
\end{align*}
and the last integral is finite. 
It follows that the theorem will be proven if we show that
\begin{equation}
\label{half-space.1}
\lim_{\varepsilon\to0}\lim_{|y|\to\infty}\frac{|y|^d}{V(x)V'(y)}S_1(x,y,\varepsilon)=0.
\end{equation}
Using an appropriate rotation we can reduce everything to the case $y_k=o(|y|)$ for every $k=2,\ldots, d-1$
and $y_1\sim |y|$. This also implies $y_d = o(|y|)$.

We first split the probability $\mathbf{P}(x+S(n)=y,\tau_x>n)$ into two parts:
\begin{align*}
\mathbf{P}(x+S(n)=y,\tau_x>n)=&\mathbf{P}(x+S(n)=y,\tau_x>n,\max_{k\le n}|X_1(k)|\le\gamma y_1)\\
&+\mathbf{P}(x+S(n)=y,\tau_x>n,\max_{k\le n}|X_1(k)|>\gamma y_1),
\end{align*}
where $\gamma\in(0,1)$. Introduce the stopping time
$$
\sigma_\gamma:=\inf\{k\ge1:|X_1(k)|>\gamma y_1\}.
$$
Then, by the Markov property,
\begin{align*}
&\mathbf{P}(x+S(n)=y,\tau_x>n,\max_{k\le n}|X_1(k)|>\gamma y_1)\\
&\hspace{1cm}=\sum_{k=1}^n \mathbf{P}(x+S(n)=y,\tau_x>n,\sigma_\gamma=k)\\
&\hspace{1cm}\le\sum_{k=1}^n\mathbf{P}(\tau_x>k-1)\mathbf{P}(|X_1|>\gamma y_1)\max_z\mathbf{P}(S(n-k)=z).
\end{align*}
Using now the bounds $\mathbf{P}(\tau_x>k)\le CV(x)k^{-1/2}$ and $\max_z\mathbf{P}(S(k)=z)\le Ck^{-d/2}$,
we obtain
\begin{align*}
&\mathbf{P}(x+S(n)=y,\tau_x>n,\max_{k\le n}|X_1(k)|>\gamma y_1)\\
&\hspace{1cm}\le CV(x)\mathbf{P}(|X_1|>\gamma y_1)\sum_{k=1}^n\frac{1}{\sqrt{k}}\frac{1}{(n-k+1)^{d/2}}\\
&\hspace{1cm}\le C V(x)\mathbf{P}(|X_1|>\gamma y_1)\frac{(\log n)^{{\rm 1}\{d=2\}}}{\sqrt{n}}.
\end{align*}

Here, in the last step we have splited the sum $\sum_{k=1}^n\frac{1}{\sqrt{k}}\frac{1}{(n-k+1)^{d/2}}$ into $\sum_{k=1}^\frac{n}{2}$ and $\sum_{k=\frac{n}{2}}^n$ and used elementary inequalities.

This implies that
\begin{align*}
&\sum_{n=1}^{\varepsilon|y|^2}\mathbf{P}(x+S(n)=y,\tau_x>n,\max_{k\le n}|X_1(k)|>\gamma y_1)\\
&\hspace{1cm}\le C\sqrt{\varepsilon}V(x)\mathbf{P}(|X_1|>\gamma y_1)|y|\left(\log |y|\right)^{{\rm 1}\{d=2\}}.
\end{align*}
As a result, for all random walks satisfying
$$
\mathbf{E}\left[|X|^{d+1}\left(\log |X|\right)^{{\rm 1}\{d=2\}}\right]<\infty,
$$
we have
\begin{equation}
\label{half-space.2} 
\sum_{n=1}^{\varepsilon|y|^2}\mathbf{P}(x+S(n)=y,\tau_x>n,\max_{k\le n}|X_1(k)|>\gamma y_1)
=o\left(\frac{V(x)}{|y|^d}\right).
\end{equation}

In order to estimate $\mathbf{P}(x+S(n)=y,\tau_x>n,\max_{k\le n}|X_1(k)|\le\gamma y_1)$ 
we shall perform the following change of measure:
$$
\overline{\mathbf{P}}(X(k)\in dz)
=\frac{e^{hz_1}}{\varphi(h)}\mathbf{P}(X(k)\in dz; |X_1(k)|\le\gamma y_1),
$$
where
$$
\varphi(h)=\mathbf{E}\left[e^{hX_1};|X_1|\le \gamma y_1\right].
$$
Therefore,
\begin{align}
\label{half-space.3}
\nonumber
&\mathbf{P}(x+S(n)=y,\tau_x>n,\max_{k\le n}|X_1(k)|\le\gamma y_1)\\
&\hspace{1cm}=e^{-hy_1}\varphi^n(h)\overline{\mathbf{P}}(x+S(n)=y,\tau_x>n).
\end{align}
According to (21) in Fuk and Nagaev \cite{FN71},
\begin{align*}
&e^{-hy_1}\varphi^n(h)\\
&\le
\exp\left\{-hy_1+hn\mathbf{E}[X_1;|X_1|\le\gamma y_1]+\frac{e^{h\gamma y_1}-1-h\gamma y_1}{\gamma^2y_1^2}
n\mathbf{E}[X_1^2;|X_1|\le\gamma y_1]\right\}.
\end{align*}
Choosing
\begin{equation}
\label{half-space.4}
h=\frac{1}{\gamma y_1}\log\left(1+\frac{\gamma y_1^2}{n\mathbf{E}[X_1^2;|X_1|\le\gamma y_1]}\right)
\end{equation}
and noting that
$$
\left|\mathbf{E}[X_1;|X_1|\le\gamma y_1]\right|
=\left|\mathbf{E}[X_1;|X_1|>\gamma y_1]\right|
\le\frac{1}{\gamma y_1}\mathbf{E}[X_1^2]=\frac{1}{\gamma y_1},
$$
we conclude that uniformly for $n\le \gamma|y|^2$ it holds
$$
e^{-hy_1}\varphi^n(h)\le \left(\frac{en}{\gamma y_1^2}\right)^{1/\gamma}.
$$
Plugging this into \eqref{half-space.3}, we obtain uniformly for $n\le \gamma|y|^2$
\begin{align}
\label{eq:half-space.5}
\nonumber
&\mathbf{P}\left(x+S(n)=y,\tau_x>n,\max_{k\le n}|X_1(k)|\le\gamma y_1\right)\\
&\hspace{1cm}\le C(\gamma)\left(\frac{n}{|y|^2}\right)^{1/\gamma}\overline{\mathbf{P}}(x+S(n)=y,\tau_x>n).
\end{align}
According to Theorem 6.2 in Esseen~\cite{Esseen68}, there exists an absolute constant $C$ such that
$$
\sup_z\overline{\mathbf{P}}(S(n)=z)\le\frac{C}{n^{d/2}}\chi^{-d/2},
$$
where
$$
\chi:=\sup_{u\ge 1}\frac{1}{u^2}\inf_{|t|=1}\overline{\mathbf{E}}\left[(t,X(1)-X(2));|X(1)-X(2)|\le u\right].
$$
Since $h$ defined in \eqref{half-space.4} converges to zero as $|y|\to\infty$ uniformly in $n\le \gamma|y|^2$,
$$
\overline{\mathbf{E}}\left[(t,X(1)-X(2));|X(1)-X(2)|\le u\right]
\to\mathbf{E}\left[(t,X(1)-X(2));|X(1)-X(2)|\le u\right]
$$
for every fixed $u$. Since $S(n)$ is truly $d$-dimensional under the original measure,
$\inf_{|t|=1}\mathbf{E}\left[(t,X(1)-X(2));|X(1)-X(2)|\le u\right]>0$ for all large values $u$.
As a result, there exists $\chi_0>0$ such that $\chi\ge\chi_0$ for all $|y|$ large enough and
all $n\le \gamma|y|^2$. Consequently,
\begin{equation}
\label{half-space.6}
\sup_z\overline{\mathbf{P}}(S(n)=z)\le \frac{C\chi_0^{-d/2}}{n^{d/2}}.
\end{equation}
Combining this bound with \eqref{eq:half-space.5}, we obtain for all $r\in(0,1)$, $\gamma<2/d$
\begin{align*}
&\sum_{n=1}^{|y|^{2-r}} \mathbf{P}\left(x+S(n)=y,\tau_x>n,\max_{k\le n}|X_1(k)|\le\gamma y_1\right)\\
&\hspace{0.5cm}\le C(\gamma)\chi_0^{-d/2}|y|^{-2/\gamma} \sum_{n=1}^{|y|^{2-r}}n^{1/\gamma-d/2}
\le C(\gamma)\chi_0^{-d/2}|y|^{-2/\gamma}|y|^{(2-r)(1/\gamma-d/2+1)},
\end{align*}
for all $n\le \gamma|y|^2$.   
If we choose $\gamma$ so small that $r(1/\gamma-d/2+1)>2$, then
\begin{equation}
\label{half-space.7}
\sum_{n=1}^{|y|^{2-r}} \mathbf{P}\left(x+S(n)=y,\tau_x>n,\max_{k\le n}|X_1(k)|\le\gamma y_1\right)
=o\left(\frac{1}{|y|^d}\right).
\end{equation}

In the case $n\ge|y|^{2-r}$ we can not ignore the condition $\tau_x>n$. By the Markov property at times
$n/3$ and $2n/3$ and by \eqref{half-space.6},
\begin{align*}
&\overline{\mathbf{P}}(x+S(n)=y,\tau_x>n)\\
&\le\sum_{z,z'}\overline{\mathbf{P}}(x+S(n/3)=z,\tau_x>n/3)\overline{\mathbf{P}}(z+S(n/3)=z')
\overline{\mathbf{P}}(z'+S(n/3)=y,\tau_{z'}>n/3)\\
&=\sum_{z,z'}\overline{\mathbf{P}}(x+S(n/3)=z,\tau_x>n/3)\overline{\mathbf{P}}(z+S(n/3)=z')
\overline{\mathbf{P}}(y+S'(n/3)=z',\tau_{y}>n/3)\\
&\le\frac{C}{n^{d/2}}\overline{\mathbf{P}}(\tau_x>n/3)\overline{\mathbf{P}}(\tau'_y>n/3).
\end{align*}
Therefore, it remains to show that, uniformly in $n\in[|y|^{2-r},|y|^2]$,
\begin{equation}
\label{half-space.8}
\overline{\mathbf{P}}(\tau_x>n/3)\le C\frac{1+x_d}{\sqrt{n}}.
\end{equation}
Indeed, from this estimate and from the corresponding estimate for the inversed walk we get
$$
\overline{\mathbf{P}}(x+S(n)=y,\tau_x>n)\le C\frac{(x_d+1)(y_d+1)}{n^{d/2+1}}.
$$
This implies with help of \eqref{eq:half-space.5} that
\begin{align*}
&\sum_{n=|y|^{2-r}}^{\varepsilon|y|^2}\mathbf{P}\left(x+S(n)=y,\tau_x>n,\max_{k\le n}|X_1(k)|\le\gamma y_1\right)\\
&\hspace{1cm}\le C\varepsilon^{1/\gamma-d/2}(x_d+1)(y_d+1)|y|^{-d}.
\end{align*}
Combining this with \eqref{half-space.2} and with \eqref{half-space.7}, we obtain \eqref{half-space.1}.

To derive \eqref{half-space.8} we first estimate some moments of the random walk $S_d(n)$ under 
$\overline{\mathbf{P}}$. By the definition of this probability measure,
$$
\overline{\mathbf{E}}[X_d]=\frac{1}{\varphi(h)}\mathbf{E}\left[X_de^{hX_1};|X_1|\le \gamma y_1\right].
$$
For the expectation on the right hand side we have the representation
\begin{align*}
&\mathbf{E}\left[X_de^{hX_1};|X_1|\le \gamma y_1\right]\\
&\hspace{1cm}=\mathbf{E}\left[X_d;|X_1|\le \gamma y_1\right]+h\mathbf{E}\left[X_dX_1;|X_1|\le \gamma y_1\right]\\
&\hspace{3cm}+\mathbf{E}\left[X_d(e^{hX_1}-1-hX_1);|X_1|\le \gamma y_1\right]\\
&\hspace{1cm}=-\mathbf{E}\left[X_d;|X_1|> \gamma y_1\right]-h\mathbf{E}\left[X_dX_1;|X_1|> \gamma y_1\right]\\
&\hspace{3cm}+\mathbf{E}\left[X_d(e^{hX_1}-1-hX_1);|X_1|\le \gamma y_1\right].
\end{align*}
In the last step we have used the equalities $\mathbf{E}[X_d]=\mathbf{E}[X_dX_1]=0$.
If
\begin{equation}
\label{half-space.9}
\mathbf{E}|X|^{3+\delta}<\infty,
\end{equation}
then, by the Markov inequality,
$$
\mathbf{E}\left[X_d;|X_1|> \gamma y_1\right]+h\mathbf{E}\left[X_dX_1;|X_1|> \gamma y_1\right]
=o(y_1^{-2})=o(n^{-1}).
$$
Therefore,
$$
\mathbf{E}\left[X_de^{hX_1};|X_1|\le \gamma y_1\right]=o(n^{-1})+
\mathbf{E}\left[X_d(e^{hX_1}-1-hX_1);|X_1|\le \gamma y_1\right].
$$
It is obvious that $|e^x-1-x|\le \frac{x^2}{2}e^{|x|}$. Therefore,
\begin{align*}
&\left|\mathbf{E}\left[X_d(e^{hX_1}-1-hX_1);|X_1|\le \gamma y_1\right]\right|\\
&\hspace{1cm}\le \frac{h^2}{2}\mathbf{E}\left[|X_d|X_1^2e^{h|X_1|};|X_1|\le\gamma y_1\right]\\
&\hspace{1cm}\le \frac{e}{2}h^2\mathbf{E}|X_d|X_1^2+h^2e^{h\gamma y_1}\mathbf{E}\left[|X_d|X_1^2;|X_1|>\frac{1}{h}\right]\\
&\hspace{1cm}\le \frac{e}{2}h^2\mathbf{E}|X_d|X_1^2+h^{2+\delta}e^{h\gamma y_1}\mathbf{E}|X_d||X_1|^{2+\delta}\\
&\hspace{1cm}\le \frac{e}{2}h^2\mathbf{E}|X|^3 + h^{2+\delta}e^{h\gamma y_1}\mathbf{E}|X|^{3+\delta}.
\end{align*}
Here in the last step we have used H\"older's inequality. 
It is immediate from the definition of $h$ that $h^2\le cn^{-1}$. Furthermore, if $n\ge|y|^{2-r}$
with some $r<\frac{\delta}{2}$, then $h^{2+\delta}e^{h\gamma y_1}=o(n^{-1})$. From these estimates and from
assumption \eqref{half-space.9}, we obtain
\begin{equation}
\label{half-space.10}
\left|\mathbf{E}\left[X_de^{hX_1};|X_1|\le \gamma y_1\right]\right|\le\frac{c}{n}
\end{equation}
uniformly in $n\in[|y|^{2-r},|y|^2]$. 

By the same arguments,
\begin{align}
\label{half-space.11}
\nonumber
\varphi(h)&=\mathbf{E}\left[e^{hX_1};|X_1|\le\gamma y_1\right]\\
\nonumber
&=\mathbf{P}(|X_1|\le \gamma y_1)+h\mathbf{E}\left[X_1;|X_1|\le\gamma y_1\right]
+\mathbf{E}\left[e^{hX_1}-1-hX_1;|X_1|\le\gamma y_1\right]\\
\nonumber
&=1-\mathbf{P}(|X_1|> \gamma y_1)-h\mathbf{E}\left[X_1;|X_1|>\gamma y_1\right]
+\mathbf{E}\left[e^{hX_1}-1-hX_1;|X_1|\le\gamma y_1\right]\\
&=1+o(n^{-1}).
\end{align}
Combining this with \eqref{half-space.10}, we finally obtain
\begin{equation}
\label{half-space.12}
\left|\overline{\mathbf{E}}X_d\right|\le\frac{c_1}{n}.
\end{equation}
We now turn to the second and the third moments of $X_d$ under $\overline{\mathbf{P}}$.
Using \eqref{half-space.11} and the moment assumption we have
\begin{align*}
\overline{\mathbf{E}}X_d^2&=\frac{1}{\varphi(h)}\mathbf{E}[X_d^2e^{hX_1};|X_1|\le\gamma y_1]
=(1+o(1))\mathbf{E}[X_d^2e^{hX_1};|X_1|\le\gamma y_1]\\
&=\mathbf{E}[X_d^2;|X_1|\le\gamma y_1]+o(1)+O\left(\mathbf{E}\left[X_d^2(e^{hX_1}-1);|X_1|\le\gamma y_1\right]\right)\\
&=1+o(1)+O\left(he^{h\gamma y_1}\right).
\end{align*}
Noting that $he^{h\gamma y_1}=o(1)$ for all $n\ge|y|^{2-r}$ we get
\begin{equation}
\label{half-space.13}
\overline{\mathbf{E}}X_d^2=1+o(1). 
\end{equation}
Similarly,
\begin{align*}
\overline{\mathbf{E}}|X_d|^3&=(1+o(1))\mathbf{E}[|X_d|^3e^{hX_1};|X_1|\le\gamma y_1]\\
&\le c\left(\mathbf{E}[|X_d|^3;|X_1|\le 1/h]+e^{h\gamma y_1}\mathbf{E}[|X_d|^3;|X_1|>1/h]\right)\\
&\le c\left(\mathbf{E}|X_d|^3+h^\delta e^{h\gamma y_1}\mathbf{E}|X_d|^{3+\gamma}\right)
\end{align*}
Using once again the fact that $h^\delta e^{h\gamma y_1}=o(1)$ for $n\ge|y|^{2-r}$, we arrive at
\begin{equation}
\label{half-space.14}
\overline{\mathbf{E}}|X_d|^3\le c_3.
\end{equation}

Now we can derive \eqref{half-space.8}. First, it follows from \eqref{half-space.12} that
$$
\overline{\mathbf{P}}(\tau_x>n/3)\le \overline{\mathbf{P}}(\tau^0_{x+c_1}>n/3),
$$
where
$$
\tau_y^0:=\inf\{k\ge1:y+S_d^0(k)\le0\}\quad\text{and}\quad
S_d^0(k)=S_d(k)-k\overline{\mathbf{E}}X_d.
$$
Applying Lemma 25 in \cite{DSW18} to the random walk $S_d^0$, we have
$$
\overline{\mathbf{P}}(\tau^0_y>k)
\le\frac{\overline{\mathbf{E}}[y+S_d^0(k);\tau_y^0>k]}{\overline{\mathbf{E}}(y+S_d^0(k))^+}
$$
Relations \eqref{half-space.13} and \eqref{half-space.14} allow the application of the central limit theorem
to the walk $S_d^0(k)$, which gives $\overline{\mathbf{E}}(y+S_d^0(k))^+\ge c\sqrt{k}$. Consequently,
$$
\overline{\mathbf{P}}(\tau^0_y>k)\le\frac{C}{\sqrt{k}}\overline{\mathbf{E}}[y+S_d^0(k);\tau_y^0>k].
$$
Further, by the optional stopping theorem,
\begin{align*}
\overline{\mathbf{E}}[y+S_d^0(k);\tau_y^0>k]&=y-\overline{\mathbf{E}}[y+S_d^0(\tau_y^0);\tau_y^0\le k]\\
&\le y-\overline{\mathbf{E}}[y+S_d^0(\tau_y^0)].
\end{align*}
We now use inequality (7) in \cite{M73} which states that there exists an absolute constant $A$ such that
$$
-\overline{\mathbf{E}}[y+S_d^0(\tau_y^0)]
\le A\frac{\overline{\mathbf{E}}|X_d|^3}{\overline{\mathbf{E}}X_d^2}.
$$
Combining this with \eqref{half-space.13} and \eqref{half-space.14}, we finally get
$$
\overline{\mathbf{P}}(\tau^0_y>k)\le \frac{C(y+1)}{\sqrt{k}},
$$
which implies \eqref{half-space.8}.
\end{proof}
\section{Asymptotics close to the boundary}
\subsection{Limit theorems for random walks starting far from the origin but close to the boundary}
Let $|y|\to\infty$ in such a way that $dist(y,\partial K)=o(|y|)$. 
Let $y_\perp\in\partial K$ be defined by the relation  $dist(y,\partial K)=|y-y_\perp|$.
Set $\sigma(y):=y_\perp/|y|\in\partial\Sigma$ and assume that $\sigma(\cdot)$ converges as $|y|\to\infty$ to some $\bar\sigma\in \partial\Sigma$.
Let $H_y$ denote a tangent hyperplane at point $y_\perp.$ Let $P_n$ be the distribution
of the linear interpolation of $t\ra (y+S(nt))/\sqrt{n}$ conditioned to stay in the half-space $K_y$ containing the cone K and having
boundary $H_y$. Then $P_n\to P$ weakly on $C[0,1]$.
Denote
$$
A_n:=\{f\in C[0,1]: f(k/n)\in K\text{ for all }1\le k\le n\}.
$$
Then 
$$
\liminf A_n\supseteq\{f\in C[0,1]: f(t)\in K\text{ for all }t\in(0,1]\}
$$
and
$$
\limsup \overline{A_n}\subseteq\{f\in C[0,1]: f(t)\in \overline{K}\text{ for all }t\in(0,1]\},
$$
where $\overline{A}$ denote the closure of $A$.

Denote for every fixed $n$ by $[0,1]\ni t\mapsto S(nt)$ the linear interpolation of $S(k), k\le n$. The conditions to apply Theorem 2.3 from Durrett \cite{Durrett78} are given. This leads to an invariance principle: $[0,1]\ni r\ra\frac{y+S(nr)}{\sqrt{n}}$ converges weakly as $\frac{n}{|y|^2}\to t$ to the Brownian meander $\{B_r,r\le 1\}$ inside the cone $K$ started at $\frac{\sigma}{\sqrt{t}}$. In particular it holds with $T_y:=\inf\{n\ge1: y+S(n)\notin K_y\}$
\begin{equation}
\label{1.step.1}
\mathbf{P}\left(\frac{y+S(n)}{\sqrt{n}}\in B\Big|\tau_y>n\right)\sim Q_{\sigma,t}(B)=\int_B q_{\sigma,t}(z)dz,\quad
\frac{n}{|y|^2}\to t
\end{equation}
where $q_{\sigma,t}(z)$ is the density of the Brownian meander in $K$, started at $\frac{\sigma}{\sqrt{t}}$ and evaluated at time $1$. 
Theorem 2.3 in \cite{Durrett78} also leads to 
\begin{equation}
\label{1.step.2}
\mathbf{P}(\tau_y>n|T_y>n)\to c_{\sigma,t}.
\end{equation}
where
$$
T_y:=\inf\{n\ge1: y+S(n)\notin K_y\}.
$$
Limiting relations \eqref{1.step.1} and \eqref{1.step.2} imply that
\begin{equation}
\label{V_low_bound}
V(y)\ge c|y|^{p-1}(1+dist(y,\partial K)).
\end{equation}
Indeed, by the harmonicity of $V$
$$
V(y)=\mathbf{E}[V(y+S(n));\tau_y>n],\ n\ge1.
$$
Fix now some $\epsilon>0$ and note that choosing $n=[|y|^2]$
it follows that $V(z)\sim u(z)$ uniformly as $z\to\infty$ as long as distance to $\partial K$ of $z$ is at least $\epsilon|z|$ (Lemma 13 in \cite{DW15}).
We obtain, as $|y|\to\infty$,
 and $\epsilon\rightarrow 0$
$$V(y)\ge \mathbf{P}(T_y>[|y|]^2)c_{\sigma,1}|y|^p\int_K u(z)q_{\sigma,1}(z)dz.
$$
Due to results for the one-dimensional random walk we arrive at
$$
\mathbf{P}(T_y>[|y|]^2)\ge c\frac{1+dist(y,\partial K)}{|y|}.
$$
This establishes \eqref{V_low_bound}.

Before proving Theorem \ref{thm:boundary} we record an auxiliary estimate needed in its proof.

\begin{lemma}\label{thm:auxmeander}
Define
$$
\phi_\sigma(t)=c_{\sigma,t}\int_K u(z)e^{-\frac{|z|^2}{2}}q_{\sigma,t}(z)dz.
$$
It holds $\phi_\sigma(t)=o(e^{-c/t})$ as $t\to 0$ for some $c>0$.
\end{lemma}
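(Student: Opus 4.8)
The plan is to bound $\phi_\sigma(t)$ by controlling the two ingredients separately: the constant $c_{\sigma,t}$ and the integral $I_\sigma(t):=\int_K u(z)e^{-|z|^2/2}q_{\sigma,t}(z)\,dz$. The key heuristic is that as $t\to0$, the meander is started at $\sigma/\sqrt t$, which is a point moving off to infinity in a \emph{tangential} direction (since $\sigma\in\partial\Sigma$), so the event that the Brownian motion survives in $K$ up to time $1$ becomes very costly. The Gaussian factor $e^{-|z|^2/2}$ in the integrand forces the endpoint $z$ to be of order $1$, hence at distance of order $1$ from the starting point $\sigma/\sqrt t$, which is at distance of order $1/\sqrt t$ from the origin. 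So both survival up to time $1$ and the requirement that the path travel a macroscopic distance contribute exponentially small factors.

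First I would get the estimate for $c_{\sigma,t}=\lim_n \mathbf P(\tau_y>n\mid T_y>n)$ with $n/|y|^2\to t$. This is the probability that a Brownian motion in the half-space $K_y$, started at a boundary-ish point, actually stays in the smaller cone $K$ up to time $1$. Because $K$ is $C^2$ and convex, locally near $y_\perp$ the cone looks like the half-space, but the starting point $\sigma/\sqrt t$ is deep inside, at distance $\sim 1/\sqrt t$ from the apex. One can bound $c_{\sigma,t}$ by the probability that a half-space Brownian meander, started at height $\mathrm{dist}(\sigma/\sqrt t,\partial K)$ which is itself at most of order $1/\sqrt t$ above the tangent plane but can be as small as order... — actually the cleanest route is: $c_{\sigma,t}\le \mathbf P(\text{BM started at }\sigma/\sqrt t\text{ stays in }K\text{ for }[0,1])/\mathbf P(T_y\text{-analogue})$. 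For the numerator, use that $K$ being $C^2$ and $\sigma\in\partial\Sigma$ means the ray through $\sigma$ hits $\partial K$, and a point at distance $\rho$ along that ray from the apex is at distance $O(\rho^2/R)$ ... hmm, rather: project onto a cross-section. The point is to produce a bound $\phi_\sigma(t)\le C\exp(-c/t)$ by combining (i) the cost of surviving in the cone when started far out near the boundary, via comparison with a one-dimensional meander or via the explicit density $q_{\sigma,t}$, and (ii) the Gaussian localization.

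Concretely, I would write $u(z)e^{-|z|^2/2}\le C e^{-|z|^2/4}$ (since $u(z)\le C|z|^p$), so $I_\sigma(t)\le C\int_K e^{-|z|^2/4}q_{\sigma,t}(z)\,dz\le C\,\mathbf P(\text{meander endpoint } B_1\in K,\ |B_1|\le R) + C e^{-R^2/8}$ for any $R$. The meander density $q_{\sigma,t}$ at a point $z$ with $|z|\le R$, started from $\sigma/\sqrt t$, is (by the reflection-type formula for the meander / by absolute continuity w.r.t. free Brownian motion on $[0,1-\delta]$ and then a survival factor) bounded by $C\,\frac{\text{survival factor}}{\text{normalization}}\cdot e^{-|z-\sigma/\sqrt t|^2/2}$, and $|z-\sigma/\sqrt t|\ge |\sigma|/\sqrt t - R = 1/\sqrt t - R$, giving $e^{-|z-\sigma/\sqrt t|^2/2}\le e^{-(1/\sqrt t-R)^2/2}$. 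Meanwhile the normalization of the meander (the survival probability of free BM started at $\sigma/\sqrt t$, appearing in the denominator of $q_{\sigma,t}$) is, for a cone, only polynomially small in... no — it decays like a power, so its reciprocal grows only polynomially and is harmless. Choosing $R=\frac{1}{2\sqrt t}$ then yields $I_\sigma(t)\le C(t)\,e^{-c/t}$, and absorbing the polynomial prefactor $C(t)$ into a slightly smaller $c$ gives $I_\sigma(t)=o(e^{-c/t})$. Since $c_{\sigma,t}\le 1$, the same bound holds for $\phi_\sigma(t)$.

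The main obstacle will be making rigorous the estimate on the meander density $q_{\sigma,t}(z)$ uniformly over the relevant range, in particular getting a clean upper bound of the form (Gaussian transition density) $\times$ (polynomially-bounded survival correction). The Brownian meander in a cone is defined as a weak limit (via Durrett's Theorem 2.3, as invoked in the excerpt), not through an explicit formula, so extracting a pointwise density bound requires either: (a) writing $q_{\sigma,t}$ via a Doob $h$-transform with $h=u$ restricted suitably, or (b) dominating the meander on $[0,1/2]$ by free BM (absolute continuity with bounded density, since conditioning on survival up to time $1$ only reweights) and using the Markov property at time $1/2$ together with a Gaussian upper bound for the $h$-transformed process. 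Route (a) is cleanest: the conditioned-to-stay-in-$K$-forever process has transition density $\sim p_t^K(x,z)u(z)/u(x)$ with $p_t^K$ the Dirichlet heat kernel, and $p_1^K(\sigma/\sqrt t, z)\le p_1(\sigma/\sqrt t,z)=C e^{-|z-\sigma/\sqrt t|^2/2}$; one then checks the meander (conditioned to survive up to time $1$ only, not forever) is dominated by a bounded multiple of this. I would carry out (a), citing the heat-kernel/harmonic-function estimates from \cite{DW15}, and this domination step is where the real work lies.
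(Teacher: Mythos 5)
Your heuristic is exactly right: the Gaussian weight $e^{-|z|^2/2}$ pins the meander endpoint to a bounded region while the meander starts at $\sigma/\sqrt t$, which is at distance $1/\sqrt t$ from the origin, and the price of that displacement is a factor $e^{-c/t}$. You also correctly absorb $u(z)$ into the Gaussian via $u(z)e^{-|z|^2/2}\le Ce^{-|z|^2/4}$, and $c_{\sigma,t}\le 1$ is harmless. But the route you propose to make this rigorous --- a pointwise bound $q_{\sigma,t}(z)\le C\,e^{-|z-\sigma/\sqrt t|^2/2}$ obtained from a Doob $h$-transform or cone heat-kernel estimates --- is harder than what is needed, and it is left unfinished; you flag this yourself (``is where the real work lies''). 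There is also a specific inaccuracy in the sketch: since $\sigma\in\partial\Sigma$, the starting point $\sigma/\sqrt t$ lies \emph{on} $\partial K$, so the survival probability $\pr_{\sigma/\sqrt t}(\tau_K>1)$ that you describe as ``polynomially small'' is actually zero. The meander density is the limit of a ratio in which numerator and denominator both vanish; to bound it you need a matching boundary-decay estimate on the Dirichlet heat kernel $p_1^K$ (a parabolic boundary Harnack estimate), which you neither prove nor cite.

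The paper's proof avoids all of this with one comparison you do not make. Since $K\subset K_y$ and $c_{\sigma,t}=\pr(\text{stay in }K\mid\text{stay in }K_y)$ is bounded away from zero as $t\to0$, the $K$-meander endpoint law is dominated, up to the constant $c_{\sigma,t}^{-1}$, by the $K_y$-meander endpoint law. In the half-space $K_y$ one rotates so that $K_y=\R^{d-1}\times\R_+$ and $\sigma=(1,0,\dots,0)$; then the meander factors into $d-1$ independent Brownian coordinates and a one-dimensional Brownian meander in the last coordinate, and the first (tangential) coordinate started at $1/\sqrt t$ is simply a Gaussian centered there, so that $\e\big[e^{-|M^\sigma(1)|^2/4}\big]\le C e^{-c/t}$ by a direct one-dimensional computation. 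This half-space reduction is the missing step in your proposal: with it in hand, no heat-kernel or Doob-transform analysis is needed, and your Gaussian-localization calculation closes the argument.
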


\begin{proof}
First we record that due to the invariance principle for the halfspace it holds 
\[
c_{\sigma,t} = \pr_{\sigma}(\tau^{me}>t) = \pr_{\sigma/\sqrt{t}}(\tau^{me}>1),
\]
where $\tau^{me}: = \inf\{t>0:M^{\sigma}(t)\not\in K_y\}$. Here $M^{\sigma}(t)$ is a Brownian meander
in $K_{y}$ whereas we will denote the Brownian meander in $K$ by $M_K^{\sigma}(t)$.
Since $|\sigma| = 1$ and $K$ is contained in $K_y$ it is clear then that $c_{\sigma,t}\ra 1$ as $t\ra 0$.

It follows 
\begin{align*}
\phi_{\sigma}(t)\le C\e_{\sigma/\sqrt{t}}\left[u(M_K^{\sigma}(1))e^{-\frac{|M_K^{\sigma}(1)|^2}{2}}\right]\le C\e_{\sigma/\sqrt{t}}\left[u(M^{\sigma}(1))e^{-\frac{|M^{\sigma}(1)|^2}{2}}\right].
\end{align*}
The second inequality can be easily justified using the invariance principles for meanders in $K$ and
$K_y$ as well as the fact that $c_{\sigma,t}\ra 1$ is bounded away from zero. 

It follows 

\[
\phi_{\sigma}(t)\le C\e_{\sigma/\sqrt{t}}\left[e^{-\frac{|M^{\sigma}(1)|^2}{4}}\right].
\]

Due to rotation invariance of Brownian motion the expectation doesn't depend on $\sigma$
so that we can choose $\sigma = (1,0,\ldots,0)$ and $K_y = \R^{d-1}\times \R_+$. The first
$d-1$ coordinates become independent Brownian motions whereas the last one is a
$1$-dimensional Brownian meander (see \cite{DIM77} for its density). This finishes the proof.



\end{proof}

\subsection{Proof of Theorem \ref{thm:boundary}} 
To estimate the contribution coming from large values of $n$
one does not need the limit theorems from the previous paragraph,
quite rough estimates turn out to be sufficient.

Set $m=[n/2]$. Then, applying the Markov property at time $m$
and inverting the time in the second part of the path, we obtain
\begin{align*}
&\mathbf{P}(x+S(n)=y,\tau_x>n)\\
&\hspace{1cm}=\sum_{z\in K}\mathbf{P}(x+S(m)=z,\tau_x>m)\mathbf{P}(y+S'(n-m)=z,\tau_y'>n-m)\\
&\hspace{1cm}\le\max_{z\in K}\mathbf{P}(x+S(m)=z,\tau_x>m)\mathbf{P}(\tau_y'>n-m).
\end{align*}
By Theorem 5 in \cite{DW15},
$$
\max_{z\in K}\mathbf{P}(x+S(m)=z,\tau_x>m)\le C\frac{V(x)}{m^{p/2+d/2}}.
$$
Furthermore, due to results for the one-dimensional random walk (see, for example Lemma 3 in \cite{DW16})
\begin{equation}
\label{3.step.1}
\mathbf{P}(\tau_y'>n-m)\le \mathbf{P}(T'_y>n-m)\le C\frac{1+dist(y,\partial K)}{\sqrt{n-m}}.
\end{equation}
Combining these estimates, we obtain
$$
\mathbf{P}(x+S(n)=y)\le CV(x)(1+dist(y,\partial K))n^{-(p+d+1)/2}.
$$
Consequently, for $A\ge2$ and $|y|\ge1$,
\begin{align}
\label{3.step}
\nonumber
\sum_{n\ge A|y|^2}\mathbf{P}(x+S(n)=y)
&\le CV(x)(1+dist(y,\partial K))\sum_{n\ge A|y|^2}n^{-(p+d+1)/2}\\
&\le CV(x)A^{-(p+d-1)/2}\frac{1+dist(y,\partial K)}{|y|^{p+d-1}}.
\end{align}

We turn now to the 'middle' part: $n\in(\varepsilon|y|^2,A|y|^2)$.
Using again the Markov property at time $m=[n/2]$ and applying Theorem 5 in \cite{DW15},
we obtain
\begin{align*}
&\mathbf{P}(x+S(n)=y,\tau_x>n)\\
&=\sum_{z\in K}\mathbf{P}(x+S(m)=z,\tau_x>m)\mathbf{P}(y+S'(n-m)=z;\tau'_y>n-m)\\
&=\frac{\varkappa H_0V(x)}{m^{p/2+d/2}}
\sum_{z\in K}\left(u\left(\frac{z}{\sqrt{m}}\right)e^{-\frac{|z|^2}{2m}}+o(1)\right)\mathbf{P}(y+S'(n-m)=z;\tau'_y>n-m)\\
&=\frac{\varkappa H_0V(x)}{m^{p/2+d/2}}
\mathbf{E}\left[u\left(\frac{S'(n-m)}{\sqrt{m}}\right)e^{-\frac{|S'(n-m)|^2}{2m}};\tau'_y>n-m\right]
+o\left(\frac{\mathbf{P}(\tau'_y>n-m)}{m^{p/2+d/2}}\right).
\end{align*}
Taking into account \eqref{3.step.1}, we have
\begin{align*}
&\mathbf{P}(x+S(n)=y,\tau_x>n)\\
&=\frac{\varkappa H_0V(x)}{m^{p/2+d/2}}
\mathbf{E}\left[u\left(\frac{S'(n-m)}{\sqrt{m}}\right)e^{-\frac{|S'(n-m)|^2}{2m}};\tau'_y>n-m\right]
+o\left(\frac{1+dist(y, \partial K)}{n^{(p+d+1)/2}}\right). 
\end{align*}
Next, it follows from \eqref{1.step.1} and \eqref{1.step.2} that if $\frac{n}{|y^2|}\sim t$ then
\begin{align*}
\mathbf{E}\left[u\left(\frac{S'(n-m)}{\sqrt{m}}\right)e^{-\frac{|S'(n-m)|^2}{2m}};\tau'_y>n-m\right]
\sim \mathbf{P}(T'_y>n-m) \phi_\sigma(t/2).
\end{align*}
Since $T'_y$ is an exit time from a half space,
$$
\mathbf{P}(T'_y>k)\sim v'(y)k^{-1/2},
$$
where $v'(y)$ is the positive harmonic function for $S'$ killed at leaving the half-space $K_{\sigma}$.
As a result,
\begin{equation*}
\mathbf{P}(x+S(n)=y,\tau_x>n)
=C_0 \frac{V(x)v'(y)}{n^{(p+d+1)/2}}\phi_\sigma\left(\frac{n}{|y|^2}\right)
+o\left(\frac{1+dist(y, \partial K)}{n^{(p+d+1)/2}}\right).
\end{equation*}
where
$$
C_0:=\varkappa H_0 2^{(p+d+1)/2}.
$$
This representation implies that
\begin{align*}
&\sum_{\varepsilon|y|^2}^{A|y|^2}\mathbf{P}(x+S(n)=y,\tau_x>n)\\
&\hspace{1cm}= C_0V(x)v'(y)\sum_{\varepsilon|y|^2}^{A|y|^2}n^{-(p+d+1)/2}\phi_\sigma\left(\frac{n}{2|y|^2}\right)
+o\left(\frac{1+dist(y, \partial K)}{n^{(p+d-1)/2}}\right)\\
&\hspace{1cm}= C_0\frac{V(x)v'(y)}{|y|^{p+d-1}}\int_\varepsilon^A\phi_\sigma(t/2) t^{-(p+d+1)/2}dt
+o\left(\frac{1+dist(y, \partial K)}{n^{(p+d-1)/2}}\right).
\end{align*}
Combining this with \eqref{3.step} and letting $A\to\infty$, one can easily obtain
\begin{equation*}
\lim_{|y|\to\infty}\frac{|y|^{p+d-1}}{V(x)v'(y)}S_2(x,y,\varepsilon)
=C_0\int_\varepsilon^\infty \phi_\sigma(t/2) t^{-(p+d+1)/2}dt.
\end{equation*}
From Lemma \ref{thm:auxmeander} it follows
\begin{equation}
\label{S2.boundary}
\lim_{\varepsilon\to0}\lim_{|y|\to\infty}\frac{|y|^{p+d-1}}{V(x)v'(y)}S_2(x,y,\varepsilon)
=C_0\int_0^\infty \phi_\sigma(t/2) t^{-(p+d+1)/2}dt.
\end{equation}

It remains to estimate $S_1(x,y,\varepsilon)$. We shall use the same strategy as in the proof of 
Theorem~\ref{thm:deep}, but instead of the Green function for the whole space we shall use the Green
function for the half-space $K_y$. More precisely,
\begin{align*}
&S_1(x,y,\varepsilon)=\sum_{n<\varepsilon|y^2|}\mathbf{P}(x+S(n)=y,\tau_x>n\ge\theta_y)\\
&=\sum_{n<\varepsilon|y^2|}\sum_{k=1}^n\sum_{z\in B_{\delta,y}}
\mathbf{P}(x+S(n)=z,\tau_x>k=\theta_y)\mathbf{P}(z+S(n-k)=y,\tau_z>n-k)\\
&= \sum_{k<\varepsilon|y|^2}\sum_{z\in B_{\delta,y}}
\mathbf{P}(x+S(n)=z,\tau_x>k=\theta_y)\sum_{j<\varepsilon|y|^2-k}\mathbf{P}(z+S(j)=y,\tau_z>j)\\
&\le \sum_{k<\varepsilon|y|^2}\sum_{z\in B_{\delta,y}}
\mathbf{P}(x+S(n)=z,\tau_x>k=\theta_y)\sum_{j<\varepsilon|y|^2}\mathbf{P}(y+S'(j)=z,T_y'>j)\\
&=\mathbf{E}\left[G_{\varepsilon, y}(x+S(\theta_y));\tau_x>\theta_y,\theta_y\le\varepsilon|y|^2\right],
\end{align*}
where
$$
G_{\varepsilon, y}(z)=\sum_{j<\varepsilon|y|^2}\mathbf{P}(y+S'(j)=z,T_y'>j).
$$
Applying Theorem \ref{thm:half-space} and \eqref{Green_large} to the random walk $S'(n)$, we obtain
$$
G_{\varepsilon, y}(z)\le
C\frac{v'(y)(1+dist(z,H_y))}{1+|z-y|^d}\wedge1.
$$
Therefore,
\begin{align}
\label{S1_boundary.1}
\nonumber
S_1(x,y,\varepsilon)
&\le C\mathbf{P}(|y-x-S(\theta_y)|\le\delta^2|y|,\tau_x>\theta_y,\theta_y\le\varepsilon|y|^2)\\
&\hspace{0.5cm}+C(\delta)\frac{v'(y)}{|y|^d}\mathbf{E}\left[(1+dist(x+S(\theta_y),H_y);
\tau_x>\theta_y,\theta_y\le\varepsilon|y|^2\right].
\end{align}
The first term has been estimated in \eqref{S1_estim_3}:
\begin{align}
\label{S1_boundary.2}
\mathbf{P}(|y-x-S(\theta_y)|\le\delta^2|y|,\tau_x>\theta_y,\theta_y\le\varepsilon|y|^2)
=o(|y|^{-p-d+1})
\end{align}
for random walks having finite moments of order $r_2(p):=p+d-1+(2-p)^+$.

In order to estimate the second term in \eqref{S1_boundary.1}, we shall perform again the change
of measure with the harmonic function $V$:
\begin{align*}
&\mathbf{E}\left[(1+dist(x+S(\theta_y),H_y);\tau_x>\theta_y,\theta_y\le\varepsilon|y|^2\right]\\
&\hspace{0.5cm}
=V(x)\mathbf{E}^{(V)}\left[\frac{1+dist(x+S(\theta_y),H_y)}{V(x+S(\theta_y))};
\theta_y\le\varepsilon|y|^2\right].
\end{align*}
Applying now \eqref{V_low_bound}, we obtain
\begin{align*}
\mathbf{E}\left[(1+dist(x+S(\theta_y),H_y);\tau_x>\theta_y,\theta_y\le\varepsilon|y|^2\right]
\le CV(x)|y|^{-p+1}\mathbf{P}^{(V)}(\theta_y\le\varepsilon|y|^2).
\end{align*}
From this estimate and \eqref{S1_estim_4} we conclude that
\begin{equation*}
\lim_{\varepsilon\to0}\lim_{|y|\to\infty}|y|^{p-1}
\mathbf{E}\left[(1+dist(x+S(\theta_y),H_y);\tau_x>\theta_y,\theta_y\le\varepsilon|y|^2\right]=0.
\end{equation*}
Combining this estimate with \eqref{S1_boundary.1} and \eqref{S1_boundary.2} as well as Lemma 13 in \cite{DW15} we get
\begin{equation}
\label{S1.boundary}
\lim_{\varepsilon\to0}\lim_{|y|\to\infty}|y|^{p+d-1}S_1(x,y,\varepsilon)=0.
\end{equation}
Since $v'(y)$ is bounded from below by a positive number, \eqref{S1.boundary} and \eqref{S2.boundary}
yield the desired result for the case $\e[|X|^{r_2(p)}]<\infty$ due to classical results for the one-dimensional random walk.

Assume now that \eqref{local_assumprand} holds. It is easy to see that the above proof that 
\begin{equation}
\lim_{\varepsilon\to0}\lim_{|y|\to\infty}\frac{|y|^{p+d-1}}{V(x)v'(y)}S_2(x,y,\varepsilon)
=C_0\int_0^\infty \phi_\sigma(t) t^{-(p+d+1)/2}dt,
\end{equation}
goes through again word for word. Therefore we focus on the asymptotic of $S_1(x,y,\varepsilon)$ in the following. 
With similar steps as above it holds 
\begin{align*}
&S_1(x,y,\varepsilon)\\
&\le C(\delta)v'(y)\e\left[\frac{1+dist(x+S(\theta_y),H_y)}{1+|x+S(\theta_y)-y|^d},|y-x-S(\theta_y)|\le\delta^2|y|,\tau_x>\theta_y,\theta_y\le\varepsilon|y|^2\right]\\&+
C(\delta)\frac{v'(y)}{|y|^d}\mathbf{E}\left[(1+dist(x+S(\theta_y),H_y);
\tau_x>\theta_y,\theta_y\le\varepsilon|y|^2\right].
\end{align*}
The second summand can be treated just as above with help of \eqref{V_low_bound} so that we need to show
\begin{align*}
\e\left[\frac{1+dist(x+S(\theta_y),H_y)}{1+|x+S(\theta_y)-y|^d},|y-x-S(\theta_y)|\le\delta^2|y|,\tau_x>\theta_y,\theta_y\le\varepsilon|y|^2\right]\\
= O(|y|^{-p-d+1}).
\end{align*}
It holds
$$
1+dist(x+S(\theta_y),H_y)\le 1+|S(\theta_y)-y|+|y-y_{\perp}|= o(|y|)+ |S(\theta_y)-y|.
$$
To complete the proof we now show for $r = d-1,d$ 
\begin{align*}
\label{eq:helpass4}
&S_{2,r}(x,y,\varepsilon)\\
&\hspace{1cm}:=\e\left[\frac{1}{1+|x+S(\theta_y)-y|^{r}},|y-x-S(\theta_y)|\le\delta^2|y|,\tau_x>\theta_y,\theta_y\le\varepsilon|y|^2\right]\\
&\hspace{1cm} = o(|y|^{-p-d+1}).
\end{align*}

With a similar calculation as in the proof of Theorem \ref{thm:deep} (using \eqref{local_1}) we obtain
\begin{align*}
&\mathbf{E}\left[\frac{1}{1+|y-x-S(\theta_y)|^{d-1}};|y-x-S(\theta_y)|\le\delta^2|y|,\tau_x>\theta_y,\theta_y\le\varepsilon|y|^2\right]\\
&\hspace{1cm}\le C(\delta)|y|^{-p-d+1}f(\delta(1-\delta)|y|)\mathbf{E}[\tau_x;\tau_x<|y|^2]\sum_{m=1}^{\delta^2|y|}\frac{m^{d-1}}{m^{d-1}}\\
&\hspace{1cm} \le C(\delta)|y|^{-p-d+2}f(\delta(1-\delta)|y|)|y|^{(2-p)^+}.
\end{align*}

Finally, 

\begin{align*}
&\mathbf{E}\left[\frac{1}{1+|y-x-S(\theta_y)|^{d}};|y-x-S(\theta_y)|\le\delta^2|y|,\tau_x>\theta_y,\theta_y\le\varepsilon|y|^2\right]\\
&\hspace{1cm}\le C(\delta)|y|^{-p-d+1}f(\delta(1-\delta)|y|)\mathbf{E}[\tau_x;\tau_x<|y|^2]\sum_{m=1}^{\delta^2|y|}\frac{m^{d-1}}{m^{d}}\\
&\hspace{1cm} \le C(\delta)\log(|y|)|y|^{-p-d+2}f(\delta(1-\delta)|y|)|y|^{(2-p)^+}.
\end{align*}

This finishes the proof of Theorem \ref{thm:boundary}.

\end{document}